\newtheorem{theorem}{Theorem}[section]
\newtheorem{lemma}[theorem]{Lemma}
\newtheorem{proposition}[theorem]{Proposition}
\theoremstyle{definition}
\theoremstyle{remark}
\numberwithin{equation}{section}
\newcommand{\supp}{\textup{supp}\,}
\newcommand{\absval}[1]{\mbox{$|#1|$}}
\newcommand{\strt}[1]{\rule{0pt}{#1}}
\newcommand{\norm}[1]{\mbox{$\left\| #1 \right\|$}}
\begin{document}
\title{Exponential decay estimates for Singular Integral operators}

\title[Exponential decay estimates for Singular Integral operators]{Exponential decay estimates for Singular Integral operators}

\author{Carmen Ortiz-Caraballo}
\address{Departamento de Matem\'aticas
\\ Escuela Polit\'ecnica, Universidad de Extremadura\\ Avda. Universidad, s/n, 10003 C\'aceres, Spain} \email{carortiz@unex.es}

\author{Carlos P\'erez}
\address{Departamento de An\'alisis Matem\'atico,
Facultad de Matem\'aticas, Universidad de Sevilla, 41080 Sevilla,
Spain} \email{carlosperez@us.es}

\author{Ezequiel Rela}
\address{Departamento de An\'alisis Matem\'atico,
Facultad de Matem\'aticas, Universidad de Sevilla, 41080 Sevilla,
Spain} \email{erela@us.es}
\thanks{The second author is supported by the Spanish Ministry of Science and Innovation grant MTM2009-08934,
the second and third authors are also supported by the Junta de Andaluc\'ia, grant FQM-4745.\\
}

\begin{abstract}
The following subexponential estimate for commutators is proved 
\begin{equation*}
|\{x\in Q: |[b,T]f(x)|>tM^2f(x)\}|\leq c\,e^{-\sqrt{\alpha\, t\|b\|_{BMO}}}\, |Q|, \qquad t>0.
\end{equation*}
where $c$ and $\alpha$ are  absolute constants, $T$ is a Calder\'on--Zygmund operator, $M$ is the  Hardy Littlewood maximal function  and $f$ is any function supported on the cube $Q\subset \mathbb{R}^n$. We also obtain that
\begin{equation*}
|\{x\in Q: |f(x)-m_f(Q)|>tM_{\lambda_n;Q}^\#(f)(x) \}|\le c\, e^{-\alpha\,t}|Q|,\qquad t>0,
\end{equation*}
where $m_f(Q)$ is the median value of $f$ on the cube $Q$ and $M_{\lambda_n;Q}^\#$ is Str\"omberg's local sharp maximal function with $\lambda_n=2^{-n-2}$. As a consequence  we derive  Karagulyan's estimate:   
\begin{equation*}
|\{x\in Q: |Tf(x)|> tMf(x)\}|\le c\, e^{-c\, t}\,|Q|\qquad t>0,
\end{equation*}
from \cite{K} improving Buckley's theorem \cite{Buckley}. A completely different  approach is used based on a combination of  ``Lerner's formula'' with some  special weighted estimates of Coifman-Fefferman  type obtained  via Rubio de Francia's algorithm. The method is  flexible enough to derive similar estimates for other operators such as  multilinear Calder\'on--Zygmund operators, dyadic and continuous square functions and vector valued extensions of both maximal functions and Calder\'on--Zygmund operators. In each case, $M$ will be replaced by a suitable maximal operator.

\subjclass{Primary  42B20, 42B25. Secondary 46B70, 47B38.}

\keywords{Commutators, singular integrals, $BMO$, $A_1$, $A_p$}

\end{abstract}
%

\maketitle

\section{Introduction}\label{intro}

A classical problem in Calder\'on--Zygmund theory is the control of a given singular operator by means  of a maximal type operator. As a model example of this phenomenon, we can take the classical Coifman--Fefferman inequality involving a Calder\'on--Zygmund (C--Z) operator and the usual Hardy--Littlewood maximal operator $M$  (see  \cite{CF}).
\begin{theorem}[Coifman--Fefferman] \label{thm:CF}
For any weight $w$ in the Muckenhoupt class $A_{\infty}$, the following norm inequality holds:
\begin{equation} \label{eq:roughC-F}
\|T^*f\|_{L^p(w)}\le c \,\|Mf\|_{L^p(w)},
\end{equation}
where $0<p<\infty$ and $c=c_{n,w,p}$ is a positive constant depending on the dimension $n$, the exponent $p$ and the weight $w$.
\end{theorem}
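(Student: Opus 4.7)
The plan is to follow the classical good-$\lambda$ strategy of Coifman and Fefferman. The key intermediate estimate I aim to establish is that there exist absolute constants $c_0, \gamma_0 > 0$ (depending only on the C--Z kernel and dimension) such that, for every $\lambda > 0$ and $0 < \gamma < \gamma_0$,
\begin{equation*}
\bigl|\{x : T^*f(x) > 2\lambda,\; Mf(x) \le \gamma\lambda\}\bigr| \le c_0\,\gamma\,\bigl|\{x : T^*f(x) > \lambda\}\bigr|.
\end{equation*}

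To prove this, I would perform a Calder\'on--Zygmund / Whitney decomposition of the open set $\Omega_\lambda := \{T^*f > \lambda\}$ into a disjoint family of cubes $\{Q_j\}$ with bounded overlaps of their doubles. Fixing one such cube $Q_j$, I would split $f = f_1 + f_2$ with $f_1 = f\chi_{2Q_j}$ and $f_2 = f - f_1$. The local part is handled by the weak-type $(1,1)$ bound for $T^*$ together with the hypothesis $Mf \le \gamma\lambda$ on the exceptional set, which forces a small measure of order $\gamma|Q_j|$. For the far part $T^*f_2$, standard smoothness of the kernel produces an oscillation estimate showing that on $Q_j$ the value of $T^*f_2$ is essentially constant, controlled by $c\,Mf(\bar x)$ at any point $\bar x \in Q_j$ not in the exceptional set; combined with the existence of points in $Q_j \setminus \Omega_\lambda^c$ where $T^*f \le \lambda$, this gives $|T^*f_2 - (\text{something} \le \lambda)| \le c\gamma\lambda$ on $Q_j$, so the contribution of $T^*f_2$ to the set $\{T^*f > 2\lambda\}$ is empty for $\gamma$ small enough.

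Once the Lebesgue-measure good-$\lambda$ inequality is in hand, I would upgrade it to a weighted version by invoking the defining property of the Muckenhoupt class $A_\infty$: there exist $\delta = \delta(w) > 0$ and $C = C(w)$ such that
\begin{equation*}
\frac{w(E)}{w(Q)} \le C\left(\frac{|E|}{|Q|}\right)^\delta
\end{equation*}
for every cube $Q$ and every measurable $E \subset Q$. Applying this to each Whitney cube $Q_j$ yields the weighted good-$\lambda$ inequality
\begin{equation*}
w\bigl(\{T^*f > 2\lambda,\,Mf \le \gamma\lambda\}\bigr) \le C\gamma^\delta\, w\bigl(\{T^*f > \lambda\}\bigr).
\end{equation*}

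The final step is the routine integration in $\lambda$ against $p\lambda^{p-1}d\lambda$ after multiplying through by $w$, choosing $\gamma$ small enough that $Cp\gamma^\delta 2^p < 1/2$ so the $T^*f$ term on the right can be absorbed into the left-hand side.

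The main technical obstacle is the absorption step: it is only legitimate once we know $\|T^*f\|_{L^p(w)} < \infty$ a priori. I would deal with this in the usual way by truncating, i.e.\ replacing $T^*f$ by $\min(T^*f,N)$ (or working with the truncated maximal singular integral), proving the inequality with a constant independent of the truncation level $N$, and then letting $N\to\infty$ via monotone convergence. The small-exponent range $0 < p < 1$ is handled by the same good-$\lambda$ argument, since the good-$\lambda$ inequality is independent of $p$ and the integration against $p\lambda^{p-1}d\lambda$ works identically.
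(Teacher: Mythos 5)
Your argument is the classical good-$\lambda$ proof, which is essentially the same approach the paper takes for Theorem \ref{thm:CF}: the paper does not reprove the result but cites \cite{CF} and sketches exactly this strategy, namely the good-$\lambda$ inequality \eqref{eq:buenoslambdastw}--\eqref{eq:buenoslambdast} via a Whitney decomposition of $\{T^*f>\lambda\}$, upgraded by the $A_\infty$ absolute-continuity property and then integrated in $\lambda$. The outline is correct; the only slip is notational — since each Whitney cube satisfies $Q_j\subset\Omega_\lambda$, the comparison point where $T^*f\le\lambda$ lies in $\Omega_\lambda^c$ at distance comparable to $\ell(Q_j)$ (e.g.\ in a fixed dilate of $Q_j$), not in ``$Q_j\setminus\Omega_\lambda^c$'' as written.
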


We use here the standard notation $T^*$ for the maximal singular integral operator of \,$T$,\, $T^*f(x)=\sup_{\varepsilon>0}|T_{\varepsilon}f(x)|$, where $T_\varepsilon$ is, as usual, the truncated singular integral. This theorem says that the ma\-xi\-mal operator $M$  plays the role of a ``control operator'' for C--Z operators, but the dependence of the constant $c$ on both $w$ and $p$ is not precise enough for some applications.  The original proof was based on the good--$\lambda$ technique introduced by Burkholder and  Gundy in \cite{BG}. The goal is to prove that the following estimate holds
\begin{equation}\label{eq:buenoslambdastw}
\left|\{x\in \mathbb{R}^n: T^*f(x)>2\lambda, Mf(x) \leq \gamma \lambda\}\right|\leq c\gamma \left|\{x\in \mathbb{R}^n: T^*(x)>\lambda\}\right|
\end{equation}
for any $\lambda>0$ and for sufficiently small $\gamma>0$. Very roughly, the main idea   to prove \eqref{eq:buenoslambdastw} is to localize  the level set $\{x\in \mathbb{R}^n: T^*f(x)>\lambda \}$  by means of Whitney cubes.  
Then the problem is reduced to study a   \emph{local} estimate of the form
\begin{equation}\label{eq:buenoslambdast}
|\{x\in Q: T^*f(x)>2\lambda, Mf(x)\leq \gamma \lambda\}|\leq c\,\gamma |Q|,
\end{equation}
where $Q$ is a cube from the Whitney decomposition and where  $f$ is supported on $Q$  and by standard methods, weighted norm inequalities for $T$ and $M$ can be derived.

In this paper we focus our attention  on the growth rate  of $\gamma$. In fact \eqref{eq:buenoslambdast} is too rough since the constant $c=c_{n, p,w}$ obtained in \eqref{eq:roughC-F} is not sharp neither on the $A_{{\infty}}$ constant  of the weight nor on $p$ as shown by Bagby and Kurtz  in \cite{BKTAMS}.

Pursuing the sharp dependece on the $A_p$ constant of the weight $w$ for the operator norm of singular integrals, Buckley  improved this good--$\lambda$ inequality \eqref{eq:buenoslambdast} (see \cite{Buckley}), obtaining a local exponential decay in $\gamma$ in the following way:
\begin{equation}\label{eq:buenoslambdastb}
|\{x\in Q: T^*f(x)>2\lambda, Mf(x)\leq \gamma \lambda\}|\leq c\,e^{-c/\gamma } |Q|.
\end{equation}
Buckley proved this estimate using as a model a more classical inequality due to Hunt for the conjugate function which was inspired by a  result of Carleson  \cite{Carleson}. 
We mention here in passing that this optimal weighted dependence, called the $A_2$ conjecture, has been proved recently and by different means by  T. Hyt\"onen in \cite{Hytonen:A2}  (see also \cite{HP}, \cite{HL-Ap-Ainf} and  \cite{HLP} for a further improvement and the recent work \cite{Lerner-simpleA2} for a very interesting simplication of the proof of the $A_2$ conjecture).  On the other hand,  this exponential decay \eqref{eq:buenoslambdastb}  has been a crucial step in deriving corresponding sharp $A_{1}$ estimate in \cite{LOP1}, \cite{LOP2}.

Our point of view is different and  has been motivated  by an improved version of  inequality \eqref{eq:buenoslambdastb} due to Karagulyan  \cite{K}:
\begin{equation}\label{eq:karagulyan}
|\{x\in Q: T^*f(x)>t Mf(x)\}|\leq c e^{-\alpha t}\, |Q|,    \qquad t>0
\end{equation}
However,  it is not clear that the  proof can be adapted to other situations.

 In the present article we present a new approach flexible enough to derive corresponding estimates for other operators. Furthermore, our approach allows to recognize and distinguish a notion of ``order of singularity'' for each operator. To be more precise and as a model, we consider a pair of operators $T_1$ an $T_2$, and consider for a fixed cube $Q$ the level set function
\begin{equation} \label{eq:generic}
\varphi(t):=\frac{1}{|Q|}|\{x\in Q: |T_1f(x)|> t|T_2f(x)|\}|, \quad t>0
\end{equation}
where $f$ stands for a function, an $m$-vector of functions or an infinite sequence, depending on the type of operators involved. In any case, all the coordinate functions are assumed to be supported on $Q$. We will provide sharp estimates on the decay rate for $\varphi(t)$ in different instances of $T_1$ and $T_2$, including the case of C--Z operators, vector-valued extensions of the maximal function or C--Z operators, commutators of singular integrals with BMO functions and higher order commutators.
 We also provide estimates for  dyadic and continuous square functions and for multilinear C--Z operators.  We summarize this different decay rates and  the maximal operators involved in Table \ref{tabla1} below (see Section \ref{sec:prelim} for the precise definitions).  
Observe that each operator has its maximal operator acting as a control operator and, further, has its specific decay rate for the corresponding level set function $\varphi(t)$.

\begin{table}[h] \label{tabla1}
\hspace{1cm}\begin{tabular}{|>{\centering} m{4.8cm}|>{\centering} m{3.2cm}| >{\centering} m{1.5cm}| m{.01cm} }
\cline{1-3}& & & \\[-.3cm]
$T_1$ & $T_2$ &   $\varphi(t)$& \\
 & & & \\ [-.3cm]
\cline{1-3}\cline{1-3}
Vector valued maximal function \\ $\overline{M}_q(\cdot)$\ , \ $1<q<\infty$& $M(|\cdot|_q)$& $e^{-\alpha t^q}$ &  \\[.6cm]
\cline{1-3}\cline{1-3}
Dyadic square function $S$ & $M$& $e^{-\alpha t^2}$ &  \\[.6cm]
\cline{1-3}\cline{1-3}
Continuous square function\\ $g_{\mu}^{*}$\, $1<\mu <\infty$ & $M$& $e^{-\alpha t^2}$ &  \\[.6cm]
\cline{1-3}
C--Z operator $T$ & $M$& $e^{-\alpha t}$ & \\[.6cm]
  \cline{1-3}
Multilinear  C--Z operator $T$ &  Multilinear \\ maximal $\mathcal{M}$& $e^{-\alpha t}$ & \tabularnewline[.6cm]
 \cline{1-3}
Vector valued extension  \\ $\overline{T}_q(\cdot)$\ , \ $1<q<\infty$ & $M(|\cdot|_q)$ &  $e^{-\alpha t}$ & \\[.6cm]
 \cline{1-3}
 Commutator $[b,T]$ & $M^2=M\circ M$& $e^{-\sqrt{\alpha t}}$ & \tabularnewline[.6cm]
  \cline{1-3}
 Iterated commutator $T^k_b$ & $M^{k+1}=M\circ\dots\circ M $\\ ($k+1$ times)& $e^{-(\alpha t)^\frac{1}{k+1}}$ & \\[.6cm]
  \cline{1-3}
\end{tabular}

\vspace{.8cm}

 \caption{Order of singularity for several operators}
\end{table}

In this work we will present two different approaches, both based on the use of  Lerner's formula (see Theorem \ref{thm:andreiformula}), which is a very powerful and succesful method as we can see in several recent situations (see \cite{CMP-ERAMS}, \cite{CMP-ADV}). Roughly, the first approach allows us to derive the exponential decay whenever there is a superlinear rate, namely, in all the cases except for the commutators. This method, although it is far from being trivial, can be seen as the natural way to exploit Lerner's formula to obtain the exponential decay. However, it fails when we consider the case of commutators.  Hence, to be able to tackle this latter case, we develop a different method, which is the more original and substantial contribution of the present article: a novel approach and a different type of proof based on weighted estimates. 
This second approach uses Lerner's formula to derive suitable local versions of weighted norm inequalities of Coifman--Fefferman type. This, combined with factorization arguments, gives \emph{all} the results, including commutators of any order. In addition, we present here a sort of ``template'', a general scheme that can be applied to any pair of operators fulfilling certain general hypothesis.

The paper is organized as follows. In Section \ref{sec:statements} we present the precise statement of our results. In Section \ref{sec:prelim} we include some preliminary definitions and tools needed in the sequel. In Section \ref{sec:Proofs-first} we present our first approach and provide the proofs of the ``superlinear'' results. In Section \ref{sec:Proofs-second} 
we present our second approach and prove \emph{all} the results of the paper. In this final section we also include some background on weights and, in addition, some new extensions of classical results.

\section{Statement of the main results}\label{sec:statements}

In this section we present the precise statement of the main results of this paper. We start with a general result involving a generic a function $f$ and its \emph{local maximal function} $M_{\lambda;Q}^\#f$ in a given cube $Q$ (see Section \ref{sec:prelim} for the precise definitions).

\

\textbf{$\bullet$ The key estimate: a John-Str\"omberg-Fefferman--Stein type inequality}

\begin{theorem}\label{thm:Feff-Steinish}
Let $Q$ be a cube and let $f\in L_{c}^\infty(\mathbb{R}^n)$  such that $\emph{supp}(f)\subseteq Q$. Then there are constants  $\alpha,\ c>0$ such that
\begin{equation}\label{eq:karagulyan-feff-stein}
|\{x\in Q: |f(x)-m_f(Q)|>tM_{2^{-n-2};Q}^\#(f)(x) \}|\le c e^{-\alpha t}|Q|,\quad t>0.
\end{equation}
\end{theorem}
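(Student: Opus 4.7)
The natural starting point is Lerner's decomposition formula, Theorem~\ref{thm:andreiformula}, applied to $f$ on the cube $Q$. It provides a collection of dyadic subcubes $\{Q_j^k\}_{k \geq 0,\, j}$ of $Q$ such that, for each fixed $k$, the cubes $\{Q_j^k\}_j$ are pairwise disjoint; the level sets $\Omega_k := \bigcup_j Q_j^k$ are nested, $\Omega_{k+1} \subseteq \Omega_k$, and decay geometrically, $|\Omega_{k+1}| \leq \tfrac{1}{2}|\Omega_k|$; and one has the pointwise bound
\begin{equation*}
|f(x) - m_f(Q)| \leq c_n \sum_{k,j} \omega_{\lambda_n}(f; Q_j^k)\, \chi_{Q_j^k}(x),
\end{equation*}
where $\omega_{\lambda_n}(f;Q')$ denotes the $\lambda_n$-oscillation on $Q'$ and $\lambda_n = 2^{-n-2}$ is precisely the parameter appearing in the statement. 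The choice of $\lambda_n$ is not arbitrary: it is dictated by the version of Lerner's formula one invokes.

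Next, since each $Q_j^k$ is a subcube of $Q$ containing $x$ whenever $\chi_{Q_j^k}(x)=1$, the definition of Str\"omberg's local sharp maximal function yields $\omega_{\lambda_n}(f;Q_j^k) \leq M^{\#}_{\lambda_n;Q}f(x)$. Inserting this and using level-wise disjointness,
\begin{equation*}
|f(x) - m_f(Q)| \leq c_n\, M^{\#}_{\lambda_n;Q}f(x)\cdot N(x), \qquad N(x) := \sum_{k,j} \chi_{Q_j^k}(x) = \#\{k : x \in \Omega_k\}.
\end{equation*}
The nesting of the $\Omega_k$ forces $\{x \in Q : N(x) > K\} = \Omega_K$, while the halving property gives $|\Omega_K| \leq 2^{-K}|Q|$.

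Combining these two observations yields, for every $t > 0$,
\begin{equation*}
|\{x \in Q : |f(x) - m_f(Q)| > t\, M^{\#}_{\lambda_n;Q}f(x)\}| \leq |\{x \in Q : N(x) > t/c_n\}| \leq 2 \cdot 2^{-t/c_n}|Q|,
\end{equation*}
which is the desired exponential estimate with $\alpha = (\log 2)/c_n$. The only non-routine ingredient is Lerner's formula itself; everything afterwards is a counting argument on how many times a given point can lie inside the generation tree $\{\Omega_k\}$, which is governed by the geometric halving. The one subtlety in execution is to verify that the oscillation parameter in the available version of Lerner's formula matches the value $\lambda_n = 2^{-n-2}$ in the statement, so that $\omega_{\lambda_n}(f;Q_j^k)$ is genuinely dominated by $M^{\#}_{\lambda_n;Q}f$; this is where the precise exponent $2^{-n-2}$ comes from.
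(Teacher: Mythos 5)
Your proof is correct, and its first half coincides with the paper's argument: both apply Lerner's formula on $Q$, dominate the oscillations by $M^{\#}_{2^{-n-2};Q}f(x)$, and thereby reduce the theorem to the estimate $|\{x\in Q:\sum_{k,j}\chi_{Q_j^k}(x)>ct\}|\le Ce^{-\alpha t}|Q|$. Where you genuinely diverge is in how this last bound is proved. The paper writes $\chi_{Q_j^k}\le \bigl(2|E_j^k|/|Q_j^k|\bigr)^q\chi_{Q_j^k}$ with $E_j^k=Q_j^k\setminus\Omega_{k+1}$ pairwise disjoint, dominates the sum by $\bigl(\overline{M}_q g\bigr)^q$ with $\|g\|_{\ell^q}\le 1$, and invokes the Fefferman--Stein exponential integrability of $\bigl(\overline{M}_q g\bigr)^q$; you instead observe that, by level-wise disjointness and nesting, $\sum_{k,j}\chi_{Q_j^k}(x)=N(x)$ is the number of generations $\Omega_k$ containing $x$, so $\{N>K\}\subseteq\Omega_{\lfloor K\rfloor+1}$, and $|\Omega_{k+1}|\le\tfrac12|\Omega_k|$ gives geometric decay. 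Note that this halving of $|\Omega_k|$ is not stated verbatim in Theorem \ref{thm:andreiformula}, but it follows in one line from (ii)--(iv): $|\Omega_{k+1}|=\sum_j|\Omega_{k+1}\cap Q_j^k|\le\tfrac12\sum_j|Q_j^k|=\tfrac12|\Omega_k|$. Your counting argument is more elementary and self-contained; the paper's $\mathrm{Exp}\,L$ route is heavier but showcases the vector-valued machinery it reuses elsewhere. Two small inaccuracies to fix in your write-up: Lerner's formula as stated reads $|f(x)-m_f(Q)|\le 4M^{\#}_{1/4;Q}f(x)+4\sum_{k,j}\omega_{2^{-n-2}}(f;\hat{Q}_j^k)\chi_{Q_j^k}(x)$, i.e.\ it carries an extra local sharp maximal term and the oscillations live on the dyadic parents $\hat{Q}_j^k$, not on $Q_j^k$. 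Both discrepancies are harmless: $M^{\#}_{1/4;Q}f\le M^{\#}_{2^{-n-2};Q}f$ because $1/4\ge 2^{-n-2}$, and $x\in Q_j^k\subset\hat{Q}_j^k\subseteq Q$ still gives $\omega_{2^{-n-2}}(f;\hat{Q}_j^k)\le M^{\#}_{2^{-n-2};Q}f(x)$, so the extra term merely replaces $t$ by $t-4$ (equivalently adjusts $c_n$), which does not affect the exponential conclusion; one should also note, as the paper implicitly does, that points where $M^{\#}_{2^{-n-2};Q}f(x)=0$ cannot lie in the level set, since then the right-hand side of Lerner's bound vanishes.
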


Such an estimate involving a function controlled in some sense by its sharp maximal function is surely related to Fefferman--Stein inequality, but the version we present here with the \emph{local} sharp maximal function goes back to the work of Str\"omberg \cite{Stromberg79} and Jawerth and Torchinsky \cite{JawTor85}.

Once we have such a general theorem, we can derive the results announced in the introduction for a wide class of singular operators. The idea is to apply the theorem to a given singular operator $\mathcal{T}$ and then use the key tool: a pointwise estimate of the form $M_{2^{-n-2};Q}^\#(\mathcal{T}f)(x)\le c\, \mathfrak{M}(f)(x)$,
where $\mathfrak{M}$ is an appropriate maximal operator.

More precisely, we have the following theorems:

\

$\bullet$ Calder\'on-Zygmund Operators.

\begin{theorem}\label{thm:singinteg}
Let $T$ be a C-Z operator  with maximal singular integral operator $T^*$. Let $Q$ be a cube and let $f\in L_{c}^\infty(\mathbb{R}^n)$  such that $\emph{supp}(f)\subseteq Q$. Then there are constants  $\alpha, c >0$ such that
\begin{equation}\label{eq:karasi}
|\{x\in Q: |T^*f(x)|>tMf(x) \}|\le c e^{-\alpha t}|Q|,\qquad t>0.
\end{equation}
\end{theorem}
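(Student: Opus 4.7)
The plan is to apply Theorem \ref{thm:Feff-Steinish} to $T^{*}f$ and convert the resulting exponential decay in $M_{\lambda_n;Q}^{\#}(T^*f)(x)$ (with $\lambda_n=2^{-n-2}$) into decay in $Mf(x)$. Although $T^*f$ is not supported on $Q$, both $m_{T^*f}(Q)$ and $M_{\lambda_n;Q}^{\#}(T^*f)(x)$ for $x\in Q$ only see the values of $T^*f$ on subcubes of $Q$, so applying Theorem \ref{thm:Feff-Steinish} to (the restriction of) $T^*f$ yields, for every $s>0$,
\begin{equation*}
|\{x\in Q:\ |T^*f(x)-m_{T^*f}(Q)|>s\,M_{\lambda_n;Q}^{\#}(T^*f)(x)\}|\le c\,e^{-\alpha s}|Q|.
\end{equation*}
Two pointwise bounds on $Q$ will then do the job: a sharp-function estimate $M_{\lambda_n;Q}^{\#}(T^*f)(x)\le c_2\,Mf(x)$ and a median bound $|m_{T^*f}(Q)|\le c_1\,Mf(x)$.

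The sharp-function estimate is the local Str\"omberg--Jawerth--Torchinsky version of the classical Fefferman--Stein inequality $M^{\#}(T^*f)\lesssim Mf$, and will be proved in the usual way: for a subcube $Q'\subseteq Q$ containing $x$, split $f=f\chi_{2Q'}+f\chi_{\mathbb{R}^n\setminus 2Q'}$; the weak-$(1,1)$ bound of $T^*$ controls the distribution function of $T^*(f\chi_{2Q'})$ at the Str\"omberg level $\lambda_n|Q'|$, while the H\"ormander smoothness of the Calder\'on--Zygmund kernel makes $T^*(f\chi_{\mathbb{R}^n\setminus 2Q'})$ vary by at most $O(Mf(x))$ across $Q'$; taking this near-constant value as the approximating constant finishes the argument. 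The median bound is an immediate consequence of weak-$(1,1)$ of $T^*$ as well: setting $\lambda:=2c_0\,|Q|^{-1}\int_Q|f|$ makes $|\{x\in Q:|T^*f(x)|>\lambda\}|\le|Q|/2$, so $|m_{T^*f}(Q)|\le\lambda\le c_1\,Mf(x)$ for every $x\in Q$, using that $Mf(x)\ge |Q|^{-1}\int_Q|f|$ on $Q$.

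To finish, if $x\in Q$ satisfies $|T^*f(x)|>tMf(x)$, then by the median bound $|T^*f(x)-m_{T^*f}(Q)|\ge (t-c_1)Mf(x)$, and by the sharp-function bound this is at least $(t-c_1)c_2^{-1}M_{\lambda_n;Q}^{\#}(T^*f)(x)$. Hence for $t\ge 2c_1$, the level set in the statement is contained in the exceptional set of Theorem \ref{thm:Feff-Steinish} at parameter $s=(t-c_1)/c_2\ge t/(2c_2)$, whose measure is at most $c\,e^{-\alpha t/(2c_2)}|Q|$; the case $t<2c_1$ is trivially absorbed into the constant. All the conceptually hard work has been packed into Theorem \ref{thm:Feff-Steinish}, so the main obstacle in this proof is the sharp-function estimate---the only step that actually uses the Calder\'on--Zygmund structure of $T$---though the argument itself is essentially classical.
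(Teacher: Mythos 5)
Your proposal is correct and follows essentially the same route as the paper: apply Theorem \ref{thm:Feff-Steinish} to $T^*f$ on $Q$, control the median $m_{T^*f}(Q)$ by $Mf$ via the weak $(1,1)$ bound (the paper does this through \eqref{eq:medianVSnormdelta} and Kolmogorov's inequality \eqref{eq:kolmogorov}), and control $M_{\lambda_n;Q}^{\#}(T^*f)$ by $Mf$ pointwise (the paper simply invokes \eqref{eq:localsharp-VS-sharp} together with the known estimate \eqref{eq:MsharpT-vs-Mf} from \cite{AP94}, whereas you sketch the classical Fefferman--Stein-type proof of that estimate). The only differences are cosmetic: you re-prove the cited sharp-function bound and phrase the median bound via a direct level-set choice rather than Kolmogorov, neither of which changes the argument.
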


\

$\bullet$ Calder\'on-Zygmund Multilinear Operators.

\begin{theorem}\label{thm:multisinginteg}
Let $T$ be a $m$-linear C-Z operator. Let $Q$ be a cube and let $\vec{f}\,$ be vector of $m$ functions $f_j\in L_{c}^\infty(\mathbb{R}^n)$  such that $\emph{supp}(f_j)\subseteq Q$ for $1\le j\le m$. Then there are constants $\alpha, \ c>0$ such that
\begin{equation}\label{eq:karasimulti}
|\{x\in Q: |T\vec{f}\,(x)|>t\mathcal{M}\vec{f}\,(x) \}|\le c e^{-\alpha t}|Q|,\qquad t>0.
\end{equation}
\end{theorem}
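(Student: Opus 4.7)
The plan is to derive \eqref{eq:karasimulti} from two ingredients, exactly in the template suggested after Theorem \ref{thm:Feff-Steinish}: the John--Str\"omberg--Fefferman--Stein inequality \eqref{eq:karagulyan-feff-stein} applied to $g=T\vec{f}$, and a pointwise sharp-function control
\begin{equation*}
M^{\#}_{2^{-n-2};Q}(T\vec{f}\,)(x)\le c\, \mathcal{M}\vec{f}(x), \qquad x\in Q.
\end{equation*}
Since each $f_j$ is bounded with support in $Q$, the weak endpoint bound of Grafakos--Torres $\|T\vec{f}\,\|_{L^{1/m,\infty}}\le c_n\,\|T\|\,\prod_j \|f_j\|_{L^1}$ ensures that $T\vec{f}$ is finite a.e.\ and has a well-defined median $m_{T\vec{f}}(Q)$. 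The same estimate gives
\begin{equation*}
|m_{T\vec{f}}(Q)|\le c_n\,\|T\|\,\prod_{j=1}^m \frac{1}{|Q|}\int_Q |f_j| \le c_n\,\|T\|\,\mathcal{M}\vec{f}(x), \qquad x\in Q,
\end{equation*}
because $|m_{T\vec{f}}(Q)|$ is dominated by the smallest $\alpha$ with $|\{y\in Q:|T\vec{f}(y)|>\alpha\}|\le |Q|/2$.

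For the sharp-function estimate, the approach is to fix a subcube $Q'\subseteq Q$ containing $x$ and decompose $f_j=f_j^0+f_j^\infty$ with $f_j^0=f_j\chi_{Q^*}$, where $Q^*$ is an appropriate dilate of $Q'$. Multilinearity expands $T\vec{f}$ into $2^m$ pieces. For the purely local piece $T(f_1^0,\dots,f_m^0)$, the weak-$L^{1/m}$ endpoint yields $|\{y\in Q':|T(f_1^0,\dots,f_m^0)(y)|>\alpha\}|\le 2^{-n-2}|Q'|$ whenever $\alpha\ge c_n\,2^{(n+2)m}\prod_j (|f_j|)_{Q^*}$, and the latter is controlled by $c\,\mathcal{M}\vec{f}(x)$. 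For each of the remaining $2^m-1$ pieces at least one $f_j^\infty$ appears, and the multilinear H\"ormander smoothness of the kernel gives pointwise oscillation on $Q'$ bounded by $c\sum_{k\ge 0}2^{-k\delta}\prod_j(|f_j|)_{2^kQ^*}\le c\,\mathcal{M}\vec{f}(x)$. Combining the two types of pieces and taking the quantile $\lambda_n=2^{-n-2}$ produces the sharp-function inequality.

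The main obstacle is precisely this sharp-function estimate: the classical Alvarez--P\'erez linear argument produces an $L^r$ average on the right-hand side for some $r>1$, whereas here one needs the endpoint control by $\mathcal{M}\vec{f}$ itself, which forces a careful use of the weak-$L^{1/m}$ endpoint of $T$ and pushes the weak-type loss into the small quantile $\lambda_n$. Once this is in hand, the triangle inequality
\begin{equation*}
|T\vec{f}(x)|\le |m_{T\vec{f}}(Q)|+|T\vec{f}(x)-m_{T\vec{f}}(Q)|
\end{equation*}
combined with the median bound, the sharp-function estimate, and \eqref{eq:karagulyan-feff-stein} yields \eqref{eq:karasimulti} after absorbing an absolute constant into $t$.
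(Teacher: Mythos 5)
Your proposal is correct and follows essentially the paper's first approach: apply Theorem \ref{thm:Feff-Steinish} to $T\vec{f}$, control the local sharp maximal function $M^{\#}_{2^{-n-2};Q}(T\vec{f}\,)$ pointwise by $\mathcal{M}\vec{f}$, and bound the median $m_{T\vec{f}}(Q)$ through the weak $L^{1/m,\infty}$ endpoint of multilinear C--Z operators. The only (inessential) difference is that the paper does not re-derive the sharp-function estimate but quotes \eqref{eq:multilinmaximalsharp} from \cite{LOPTTG} together with \eqref{eq:localsharp-VS-sharp}, and it controls the median via Kolmogorov's inequality \eqref{eq:kolmogorov}, which amounts to the same quantile argument you use.
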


\

$\bullet$ Vector-valued extensions.

\begin{theorem}\label{thm:vectorvaluedSingular}
Let $1<q<\infty$ and let $\overline{T}_{q}$ be the vector-valued extension  of $T$, where $T$ is a C-Z operator. Then there are constants $\alpha, \ c>0$ such that for any cube $Q$ and any vector-function $f=\{f_j\}_{j=1}^{\infty}$ with  $\supp f\subseteq Q$:
\begin{equation} \label{eq:karasivec}
|\{x\in Q: \overline{T}_{q}f(x)> tM(|f|_q)(x)\}|\leq c e^{-\alpha t}\, |Q|,\qquad t>0.
\end{equation}

\end{theorem}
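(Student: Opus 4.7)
The plan is to apply Theorem \ref{thm:Feff-Steinish} to the scalar-valued function $F(x):=\overline{T}_q f(x)=\left(\sum_{j=1}^\infty |Tf_j(x)|^q\right)^{1/q}$ on the cube $Q$. This immediately produces
\begin{equation*}
\bigl|\{x \in Q : |F(x)-m_F(Q)| > t\,M_{\lambda_n;Q}^\#(F)(x)\}\bigr| \le c\,e^{-\alpha t}|Q|,\qquad t>0,
\end{equation*}
so the theorem will follow if we can show (i) $M_{\lambda_n;Q}^\#(F)(x) \le c\,M(|f|_q)(x)$ pointwise on $Q$, and (ii) $|m_F(Q)| \le c\,M(|f|_q)(x)$ for every $x\in Q$. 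Once both are in hand, the triangle inequality $|F(x)| \le |F(x)-m_F(Q)| + |m_F(Q)|$ combined with a harmless rescaling $t\mapsto c't$ yields \eqref{eq:karasivec}.

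For step (i), I would establish the vector-valued analogue of the classical Alvarez--P\'erez sharp-function estimate. Fix $x\in Q$ and a subcube $Q'\ni x$ contained in $Q$. Split coordinatewise $f_j = f_j\chi_{2Q'} + f_j\chi_{(2Q')^c}$, and take the centering constant $c_{Q'}:=\overline{T}_q(f\chi_{(2Q')^c})(x_0)$ for a fixed $x_0\in Q'$. For the local piece, Kolmogorov's inequality together with the vector-valued weak-type $(1,1)$ bound of Benedek--Calder\'on--Panzone for $\overline{T}_q$ controls the relevant quasinorm by the average $\frac{1}{|2Q'|}\int_{2Q'}|f|_q$, which is at most $c\,M(|f|_q)(x)$. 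For the far-away piece, the reverse triangle inequality in $\ell^q$ together with Minkowski's integral inequality gives
\begin{equation*}
\bigl|\overline{T}_q(f\chi_{(2Q')^c})(y) - \overline{T}_q(f\chi_{(2Q')^c})(x_0)\bigr| \le \int_{(2Q')^c} |K(y,z)-K(x_0,z)|\,|f(z)|_q\,dz,
\end{equation*}
and the standard H\"ormander-type kernel smoothness estimate bounds this by a multiple of $M(|f|_q)(x)$.

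Step (ii) follows from the same weak-type $(1,1)$ bound: since $f$ is supported on $Q$, the defining property of the median gives $|Q|/2 \le |\{y\in\mathbb{R}^n : F(y) > m_F(Q)\}| \le c\|f\|_{L^1(\ell^q)}/m_F(Q)$, whence $m_F(Q) \le \frac{c}{|Q|}\int_Q|f|_q \le c\,M(|f|_q)(x)$ uniformly in $x\in Q$.

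The main technical obstacle is the pointwise estimate in (i). Although it parallels the scalar argument used for Theorem \ref{thm:singinteg}, one has to carry the $\ell^q$ structure through both the Kolmogorov/weak-type step (where the vector-valued weak $(1,1)$ bound is indispensable) and the kernel-regularity step (where Minkowski's integral inequality in the $\ell^q$ variable is the essential input).
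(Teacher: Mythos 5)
Your proposal is correct and follows essentially the same route as the paper's: apply Theorem \ref{thm:Feff-Steinish} to $\overline{T}_qf$, control the local sharp maximal function by $M(|f|_q)$ pointwise, and bound the median via Kolmogorov's inequality together with the vector-valued weak $(1,1)$ bound. The only difference is that you re-derive the sharp-function estimate for $\overline{T}_q$ by hand (the Alvarez--P\'erez-type splitting with the $\ell^q$ reverse triangle and Minkowski inequalities), whereas the paper simply invokes it as inequality \eqref{eq:vectorialmaximalsharp} from \cite{PTG} combined with \eqref{eq:localsharp-VS-sharp}.
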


\

\begin{theorem}\label{thm:vectorvaluedMaximal}
Let $1<q<\infty$ and let\, $\overline{M}_{q}$ be the vector-valued extension of $M$. Then there are constants $\alpha, \ c>0$ such that for any cube $Q$ and any vector-function $f=\{f_j\}_{j=1}^{\infty}$ with  $\supp f\subseteq Q$:
\begin{equation} \label{eq:karasivecmaximal}
|\{x\in Q: \overline{M}_{q}f(x)> tM(|f|_q)(x)\}|\leq c e^{-\alpha t^q}\, |Q|,\qquad t>0.
\end{equation}
\end{theorem}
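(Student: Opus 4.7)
The plan is to reduce to the John--Str\"omberg--Fefferman--Stein inequality of Theorem \ref{thm:Feff-Steinish}, but applied to the scalar auxiliary function
$$g(x) := \big(\overline{M}_{q}f(x)\big)^q = \sum_{j=1}^{\infty} (Mf_j(x))^q$$
rather than to $\overline{M}_{q}f$ itself. Raising to the $q$-th power is precisely the change of variables $s = t^{q}$ that turns the linear exponential rate $e^{-\alpha s}$ of Theorem \ref{thm:Feff-Steinish} into the advertised rate $e^{-\alpha t^{q}}$ in \eqref{eq:karasivecmaximal}. Thus, as in the template sketched after Theorem \ref{thm:Feff-Steinish}, the whole matter is to exhibit a good pointwise bound for the local sharp maximal function of $g$.

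The central estimate to prove is
$$M^{\#}_{\lambda_n;Q}(g)(x) \;\leq\; C_{n,q}\,\big(M(|f|_q)(x)\big)^{q}, \qquad x\in Q,\ \lambda_n = 2^{-n-2}.$$
To establish it, fix $x\in Q$ and a subcube $Q'\ni x$, and decompose $f_j = f_j^{1} + f_j^{2}$ with $f_j^{1} = f_j\chi_{2Q'}$. From $(a+b)^q \leq 2^{q-1}(a^q + b^q)$,
$$g(y)\;\leq\; 2^{q-1}\sum_{j} (Mf_j^{1}(y))^{q} \;+\; 2^{q-1}\sum_{j}(Mf_j^{2}(y))^{q}, \qquad y\in Q'.$$
A standard tail/H\"ormander-type argument shows that $\sum_{j}(Mf_j^{2}(y))^{q}$ oscillates by at most $C(M(|f|_q)(x))^{q}$ on $Q'$, so it can be absorbed into the constant $c$ subtracted in the definition of $M^{\#}_{\lambda_n;Q}(g)$. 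For the local part, using $\sum_{j}|f_j^{1}|^{q} \leq (|f|_q\,\chi_{2Q'})^{q}$ together with Kolmogorov's inequality and the vector--valued weak-$(1,1)$ bound of Fefferman--Stein yields a weak-$L^{1}$ estimate for $\sum_{j}(Mf_j^{1})^{q}$ with norm controlled by $\int_{2Q'}(|f|_q)^{q} \leq C(M(|f|_q)(x))^{q}|Q'|$. Choosing the truncation level so that the exceptional set in $Q'$ has measure at most $\lambda_n|Q'|$ then gives the desired pointwise bound.

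With the sharp maximal estimate in hand, the proof is completed as follows. One may assume, by discarding an exceptional set of controllable measure, that the median $m_g(Q)$ is comparable to $(M(|f|_q)(x))^{q}$ on the relevant portion of $Q$; then on $\{\overline{M}_{q}f > t\,M(|f|_q)\}$ one has $g(x) > t^{q}(M(|f|_q)(x))^{q}$, and provided $t$ is at least some absolute constant,
$$|g(x)-m_g(Q)| \;\geq\; \tfrac{1}{2}\,t^{q}\,(M(|f|_q)(x))^{q} \;\geq\; c\,t^{q}\,M^{\#}_{\lambda_n;Q}(g)(x).$$
Plugging $s = c\,t^{q}$ into Theorem \ref{thm:Feff-Steinish} applied to $g$ produces the bound $c\,e^{-\alpha t^{q}}|Q|$, and for the remaining small range of $t$ the estimate is trivial by adjusting $c$.

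The main obstacle is the sharp maximal estimate for $g$: the naive route would bound $M^{\#}_{\lambda_n;Q}(\overline{M}_{q}f)(x)$ directly by $M(|f|_q)(x)$ and yield only the weaker decay $e^{-\alpha t}$. Extracting the improved exponent $t^{q}$ requires working with the $q$-th power $g$ and exploiting the $\ell^{q}$-summability through a Kolmogorov / sub-$L^{1}$ argument for the maximal function; this is the technically delicate step that carries the whole geometry of the vector-valued setting.
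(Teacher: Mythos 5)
Your overall route is the paper's own first approach: apply Theorem \ref{thm:Feff-Steinish} to $g=(\overline{M}_qf)^q$, bound $M^{\#}_{2^{-n-2};Q}(g)$ pointwise by $C\,(M(|f|_q))^q$, bound the median $m_g(Q)$ by $c\,(M(|f|_q)(x))^q$ via \eqref{eq:medianVSnormdelta}, Kolmogorov's inequality \eqref{eq:kolmogorov} and the Fefferman--Stein vector-valued weak $(1,1)$ bound, and absorb it for $t$ large (only this upper bound is needed; your remark that $m_g(Q)$ is ``comparable'' to $(M(|f|_q)(x))^q$ after discarding an exceptional set is neither needed nor true, though the inequality you actually use afterwards is the correct one). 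The difference is that the paper obtains the crucial sharp-maximal bound by quoting the oscillation estimate \eqref{eq:vectormaximalOscila} from \cite{CMP-ADV}, whereas you sketch a proof of it, and that sketch is where the problems lie.

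Two concrete issues. First, the local part: the displayed bound $\int_{2Q'}(|f|_q)^q\le C(M(|f|_q)(x))^q|Q'|$ is false in general, since $M(|f|_q)(x)$ controls $L^1$ averages of $|f|_q$, not $L^q$ averages. What the Fefferman--Stein weak $(1,1)$ estimate actually gives is $|\{y:\sum_j(Mf_j^1(y))^q>s\}|\le C s^{-1/q}\int_{2Q'}|f|_q$, and choosing $s$ so that the right-hand side equals $2^{-n-2}|Q'|$ yields a rearrangement bound by $C\bigl(\tfrac{1}{|Q'|}\int_{2Q'}|f|_q\bigr)^q\le C(M(|f|_q)(x))^q$; this is repairable, but it is not what you wrote. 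Second, and more seriously, the far part: the claim that $\sum_j(Mf_j^2)^q$ ``oscillates by at most $C(M(|f|_q)(x))^q$ on $Q'$'' by a standard tail argument is a genuine gap. For the non-dyadic maximal function the standard fact is only that $M(f_j\chi_{(2Q')^c})$ is comparable at any two points of $Q'$ up to a multiplicative dimensional constant, and since $\overline{M}_qf$ is \emph{not} pointwise dominated by $C\,M(|f|_q)$ (if it were, \eqref{eq:karasivecmaximal} would be trivial for large $t$), multiplicative comparability of a possibly large quantity gives no additive control of its oscillation. To close this step one must either pass to the dyadic operator $\overline{M}^d_q$, for which the contribution of cubes strictly containing $Q'$ is exactly constant on $Q'$ in each coordinate---this is how \eqref{eq:vectormaximalOscila} is proved in \cite{CMP-ADV}, and then the reduction from $M$ to its dyadic variants must also be addressed---or run a genuine scale-by-scale argument (group the indices $j$ according to the size $2^k\ell(Q')$ of their near-optimal cubes, apply Minkowski's inequality at each scale, and use the Lipschitz gain $2^{-k}$ in the averages). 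As written, the heart of the theorem is asserted rather than proved.
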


\

$\bullet$ Littlewood-Paley square functions.

\begin{theorem}\label{thm:squarefunction}
Let  $S$ be the dyadic square function and let $g_{\mu}^*$ be the continuous Littlewood-Paley square function, with $\mu>3$. Let $Q$ be a cube and let $f\in L_{c}^\infty(\mathbb{R}^n)$  such that $\emph{supp}(f)\subseteq Q$. Then there are constants $\alpha, \ c>0$ such that
\begin{equation}\label{eq:karasquare}
|\{x\in Q: Sf(x)>tMf(x) \}|\le c e^{-\alpha t^2}|Q|,\qquad t>0.
\end{equation}
and
\begin{equation}\label{eq:g^*square}
|\{x\in Q: g_{\mu}^*(f)(x)>tMf(x) \}|\le c e^{-\alpha t^2}|Q|,\qquad t>0.
\end{equation}

\end{theorem}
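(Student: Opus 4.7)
The plan is to follow the general template given in the paper just after Theorem~\ref{thm:Feff-Steinish}, namely to combine the key John--Str\"omberg--Fefferman--Stein type estimate with a pointwise sharp maximal function bound for the operator at hand. The decisive new twist, which accounts for the subgaussian rate $e^{-\alpha t^2}$ as opposed to the linear exponential $e^{-\alpha t}$ that appears for C--Z operators, is that I would apply Theorem~\ref{thm:Feff-Steinish} not to $Sf$ itself but to its \emph{square} $(Sf)^2$, together with a pointwise estimate of the form $M^\#_{\lambda;Q}\bigl((Sf)^2\bigr)(x)\le c\,(Mf(x))^2$. A substitution $s\sim t^2$ then turns the linear exponential produced by Theorem~\ref{thm:Feff-Steinish} into the claimed Gaussian-type decay.

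Concretely, first I would verify that $(Sf)^2 \in L^\infty_c(\mathbb{R}^n)$ (using the support of $f$, the $L^p$ boundedness of $S$, and decay of $Sf$ off a neighborhood of $Q$), so that Theorem~\ref{thm:Feff-Steinish} applied to $g=(Sf)^2$ yields
\[
|\{x\in Q:\, |(Sf)^2(x) - m_{(Sf)^2}(Q)|>s\, M^\#_{2^{-n-2};Q}((Sf)^2)(x)\}|\le c\,e^{-\alpha s}|Q|,\qquad s>0.
\]
Combined with the pointwise bound $M^\#_{\lambda;Q}\bigl((Sf)^2\bigr)(x)\le c\,(Mf(x))^2$, this reduces matters to absorbing the median. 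Since $S$ is of weak type $(1,1)$, Chebyshev gives $m_{Sf}(Q)\le c\,|Q|^{-1}\int_Q |f| \le c\,Mf(x)$ for every $x\in Q$; squaring, $m_{(Sf)^2}(Q)\le c\,(Mf(x))^2$. Setting $s=c t^2$ and using the inclusion $\{Sf>t\,Mf\}\cap Q \subseteq \{(Sf)^2>t^2(Mf)^2\}\cap Q$ then produces \eqref{eq:karasquare}. The argument for $g_\mu^*$ is identical once one has the analogous pointwise estimate $M^\#_{\lambda;Q}\bigl((g_\mu^* f)^2\bigr)(x)\le c\,(Mf(x))^2$; the hypothesis $\mu>3$ enters precisely there, to guarantee absolute convergence of the cone integral and adequate tail decay of its Poisson-type weight $(1+|y-z|/t)^{-n\mu}$.

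The main obstacle is therefore the pointwise sharp maximal estimate $M^\#_{\lambda;Q}\bigl((Sf)^2\bigr)\le c\,(Mf)^2$, and its more delicate counterpart $M^\#_{\lambda;Q}\bigl((g_\mu^* f)^2\bigr)\le c\,(Mf)^2$ for $\mu>3$. These are localized, quantitative variants of classical Wilson/Lerner type sharp function inequalities for square functions---the same $L^2$ mechanism that is responsible for the sharp $[w]_{A_\infty}^{1/2}$ weighted bounds for $S$ and $g_\mu^*$, and which is why the exponent is a square root inside the exponential rather than a first power. The proof should proceed by fixing a subcube $Q'\subseteq Q$ and a point $x\in Q'$, splitting the operator into a local piece supported near $Q'$ (handled by orthogonality of the Haar expansion in the dyadic case, or by the standard Lusin area estimate in the continuous case) and a tail piece (controlled by $Mf(x)$ via smoothness of the kernel or, for $g_\mu^*$, via the rapid decay in $(1+\cdot)^{-n\mu}$ which is summable exactly when $\mu>3$).
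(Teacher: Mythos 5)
Your argument is essentially the paper's own (first-approach) proof: rewrite the level set for $(Sf)^2$ versus $t^2(Mf)^2$, apply Theorem \ref{thm:Feff-Steinish} to the squared operator together with the pointwise bounds \eqref{eq:sharplocal-SquareDyadic} and \eqref{eq:sharplocal-SquareCont}, control the median through the weak $(1,1)$ type (Kolmogorov/Chebyshev), and substitute $s\sim t^2$ to get the Gaussian decay. The only cosmetic difference is that the paper quotes the sharp-maximal estimates for $(S_df)^2$ and $(g_\mu^*f)^2$ from \cite{CMP-ADV} and \cite{Lerner-JFA} rather than sketching their local/tail proof as you do.
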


\

We also present here the result for commutators, although it will not follow from Theorem \ref{thm:Feff-Steinish}. We will prove this theorem following the ``weighted approach'' announced in the introduction.

\

$\bullet$ Commutators.

\begin{theorem}\label{thm:commutator}
Let $T$ be a Calder\'{o}n--Zygmund operator an let $b$ be in $BMO$. Let $f$ be a function such that $\supp f\subseteq Q$. Then there are constants, such that
\begin{equation} \label{eq:karasicommu}
|\{x\in Q: |[b,T]f(x)|>tM^2f(x)\}|\leq c e^{-\sqrt{\alpha t\|b\|_{BMO}}}\, |Q|, \qquad t>0.
\end{equation}

Similarly,  for  higher commutators we have 
\begin{equation} \label{eq:karasimcommu}
|\{x\in Q: |T_b^kf(x)|> tM^{k+1}f(x)\}|\leq c e^{-(\alpha t\|b\|_{BMO})^{1/(k+1)}}\, |Q|,
\end{equation}
for all  $t>0$.
\end{theorem}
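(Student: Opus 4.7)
The plan is to bypass Theorem~\ref{thm:Feff-Steinish}, which cannot be applied to the commutator: the classical Alvarez--P\'erez pointwise bound $M_{2^{-n-2};Q}^{\#}([b,T]f)\lesssim\|b\|_{BMO}(M(Tf)+M^{2}f)$ features the term $M(Tf)$, which is not dominated by $M^{2}f$. Instead I would follow the ``weighted approach'' announced in the introduction, extracting a local Coifman--Fefferman $L^{p}$-type inequality for $[b,T]$ with sharp $p^{2}$ dependence and converting it into subexponential distributional decay by optimization in $p$.

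The central intermediate step is the local bound
\[
\Bigl(\frac{1}{|Q|}\int_{Q}|[b,T]f|^{p}\,dx\Bigr)^{1/p}\le c\,p^{2}\,\|b\|_{BMO}\Bigl(\frac{1}{|Q|}\int_{Q}(M^{2}f)^{p}\,dx\Bigr)^{1/p},\qquad p\ge 1,
\]
with $c$ absolute. I would establish it by following the ``template'' sketched in the introduction: apply Lerner's formula to $[b,T]f$ to decompose its oscillations into a sum over dyadic subcubes of $Q$, then, for a non-negative test function $g$ in the unit ball of $L^{p'}(Q)$, manufacture its $A_{1}$ majorant via Rubio de Francia's iteration $Rg=\sum_{k\ge 0}M^{k}g/(2\|M\|_{L^{p'}(Q)})^{k}$ (so $[Rg]_{A_{1}}\lesssim\|M\|_{L^{p'}}\lesssim p$ and $\|Rg\|_{L^{p'}}\le 2\|g\|_{L^{p'}}$), and close with the classical weighted Coifman--Fefferman inequality for commutators (linear in the $A_{\infty}$ constant). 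The $L(\log L)$ character of $[b,T]$ supplies one factor of $p$ and the Rubio de Francia $A_{1}$-constant supplies another, giving $p^{2}$. An alternative implementation uses the Coifman--Rochberg--Weiss conjugation identity $[b,T]f=\frac{1}{2\pi i}\oint_{|z|=\varepsilon}e^{-zb}T(e^{zb}f)\,z^{-2}\,dz$ with local $L^{p}$ estimates for $T$ on the $A_{2}$ weights $e^{z\,\mathrm{Re}\,b}$ (admissible for $|z|\lesssim 1/\|b\|_{BMO}$ by John--Nirenberg), optimized at $\varepsilon\sim 1/(p\|b\|_{BMO})$.

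Once the $L^{p}$ bound is in hand, the conclusion comes by a standard distribution-function argument: either through a Yano-type exponential-integrability upgrade (yielding $|Q|^{-1}\int_{Q}\exp\bigl((c|[b,T]f|/(\|b\|_{BMO}M^{2}f))^{1/2}\bigr)\,dx\le C$ and then Chebyshev) or through a dyadic decomposition of the level sets of $M^{2}f$ combined with Chebyshev on each piece. Either route gives, for every $p\ge 1$,
\[
\frac{|\{x\in Q:|[b,T]f(x)|>tM^{2}f(x)\}|}{|Q|}\le \Bigl(\frac{cp^{2}\|b\|_{BMO}}{t}\Bigr)^{p},
\]
and minimization over $p\ge 1$ at $p\sim\sqrt{t/(c\|b\|_{BMO})}$ produces the subexponential decay claimed in \eqref{eq:karasicommu}. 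The iterated case $T_{b}^{k}$ is parallel: the Cauchy formula $T_{b}^{k}f=\tfrac{k!}{2\pi i}\oint e^{-zb}T(e^{zb}f)\,z^{-(k+1)}\,dz$ (equivalently, $k$ nested Lerner--Rubio de Francia iterations) upgrades the right-hand side to $p^{k+1}\|b\|_{BMO}^{k}$, and optimizing at $p\sim(t/\|b\|_{BMO})^{1/(k+1)}$ yields \eqref{eq:karasimcommu}.

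The main obstacle I expect is producing the local Coifman--Fefferman inequality with exactly the correct $p^{2}$ (resp.~$p^{k+1}$) growth \emph{and} with $M^{2}f$ (resp.~$M^{k+1}f$) on the right-hand side, rather than a larger Orlicz-type maximal such as $M_{L(\log L)}f$. The $L(\log L)$ oscillation of $[b,T]f$ must be absorbed into exactly two iterates of $M$ (one from the commutator structure, one from the Rubio de Francia factorization), and making Lerner's decomposition respect this bookkeeping while preserving the support of $f$ throughout the $A_{1}$-majorant construction is the delicate technical step.
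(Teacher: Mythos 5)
Your overall architecture (bypassing Theorem \ref{thm:Feff-Steinish}, a Rubio de Francia construction, $p^{2}$ growth, optimization in $p$) is in the spirit of the paper's second approach, but the pivot of your argument has a genuine gap: you reduce everything to the \emph{unweighted} local norm inequality $\|[b,T]f\|_{L^{p}(Q)}\le c\,p^{2}\|b\|_{BMO}\|M^{2}f\|_{L^{p}(Q)}$ and then claim a ``standard distribution-function argument'' yields $|\{x\in Q:|[b,T]f|>tM^{2}f\}|\le (cp^{2}\|b\|_{BMO}/t)^{p}|Q|$. That passage fails. A comparison of the two $L^{p}(Q)$ norms does not control $\int_{Q}\bigl(|[b,T]f|/M^{2}f\bigr)^{p}dx$, which is what Chebyshev on the ratio requires: your dyadic decomposition of the level sets of $M^{2}f$ would need the norm inequality localized to each set $\{2^{j}\le M^{2}f<2^{j+1}\}$, which you do not have, and the Yano-type exponential-integrability upgrade presupposes exactly the moment bounds on the ratio that are missing. (A toy example showing the implication is false in general: on $Q=[0,1]$ take $g\equiv 1$ and $h=1$ on $[0,1/2]$, $h=\delta$ on $(1/2,1]$; then $\|g\|_{p}\le 2\|h\|_{p}$ for every $p$, yet $|\{g>th\}|\ge \tfrac12|Q|$ for all $t<1/\delta$.) The paper avoids this by applying Chebyshev and duality \emph{directly to the ratio}, so that after the Rubio de Francia majorization the weight appearing is $R(h)\,(M^{2}f)^{-1}$. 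This is why one must prove a local Coifman--Fefferman inequality for \emph{general} $A_{q}$ weights with an explicit power of $[w]_{A_{q}}$ (Theorem \ref{thm:cflocalcom}, obtained by applying the Lerner-formula Proposition \ref{pro:previo} twice and controlling the median value through the $L\log L$ endpoint bound of Theorem \ref{thm:weakcomm} and Stein's estimate), and must invoke Coifman--Rochberg to guarantee $(M^{2}f)^{1/(q-1)}\in A_{1}$ with constant independent of $f$, so that $[R(h)(M^{2}f)^{-1}]_{A_{q}}\lesssim p$ via \eqref{eq:pro1peso} and Lemma \ref{lem:rubiodefrancia}. Your scheme, which only ever feeds $A_{1}$ majorants $Rg$ of dual test functions into the weighted inequality, produces the norm inequality (which is true, and indeed follows from the paper's machinery) but cannot reach \eqref{eq:karasicommu}.

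There is also an inconsistency in your bookkeeping of the $p^{2}$. You assert the weighted Coifman--Fefferman inequality for $[b,T]$ against $M^{2}$ is \emph{linear} in the $A_{\infty}$ constant and that the second factor of $p$ comes from ``the $L(\log L)$ character,'' but no mechanism is given for the latter. In the paper's scheme the decisive point is that the weighted inequality is \emph{quadratic}, $[w]_{A_{q}}^{2}$ in \eqref{eq:cflocal-commu} (and $[w]_{A_{q}}^{k+1}$ in \eqref{eq:cflocal-commu-super}); it is this exponent $\beta=2$ (resp.\ $k+1$), combined with the single factor $[R(h)]_{A_{1}}\lesssim p$, that gives $(c\,p^{\beta}/t)^{p}$ and hence $e^{-\sqrt{\alpha t}}$ (resp.\ $e^{-(\alpha t)^{1/(k+1)}}$) upon optimizing in $p$. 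With a genuinely linear weighted inequality your own scheme would output $e^{-\alpha t}$, which is not the claimed (and correct) rate. The Coifman--Rochberg--Weiss conjugation alternative is a reasonable heuristic for $p^{k+1}$ growth of operator norms, but as formulated it again yields only norm inequalities and so inherits the same gap.
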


\section{Preliminaries and notation}\label{sec:prelim}

In this section we gather  some well known  definitions and properties which will be used along this paper. We will adopt the usual notation $f_Q= \frac{1}{|Q|}\int_Q f(y)\, dy.$ for the average over a cube $Q$ of a function $f$.

\subsection{Maximal Functions}
Given a locally integrable function $f$ on $\mathbb{R}^n$, the \emph{Hardy--Littlewood maximal operator} $M$ is defined by
\begin{equation*}
Mf(x)=\sup_{Q\ni x}{\frac1{|Q|}}\int_{Q} f(y)\\,dy,
\end{equation*}
where the supremum is taken over all cubes $Q$ containing the point $x$.  For $\varepsilon>0$, we define:
$$
M_\varepsilon  f(x)=(M (|f|^\varepsilon)(x))^{1/\varepsilon}.
$$
The  usual sharp maximal function of Fefferman--Stein is defined as:
$$M^\#(f)(x)=\sup_{Q\ni x}\inf_c\frac{1}{|Q|}\int_Q |f(y)-c|\, dy,  $$
We will also use the following operator:
$$M^\#_\delta(f)(x)=\sup_{Q\ni x}\inf_c\left(\frac{1}{|Q|}\int_Q |f(y)-c|^\delta\, dy,\right)^\frac{1}{\delta}.  $$
If the supremum is restricted to the dyadic cubes, we will use respectively the following notation  $M^d$, $M_\delta ^{\#,d}$ and $M_\delta ^{d}$.
We will also need to consider iterations of maximal functions. Let $M^k$ be defined as
$$M^k:= M\circ \cdot\cdot  \circ M \qquad (k\, times).$$
In addition, for a given cube $Q$, we will consider local maximal functions. For a fixed cube $Q$, we will denote by $\mathcal{D}(Q)$ to the family of all dyadic subcubes with respect to the cube $Q$. The maximal function $M^Q$ is defined by
\begin{equation*}
M^Qf(x)=\sup_{P\in \mathcal{D}(Q), P\ni x}{\frac1{|P|}}\int_{P} f(y)\\,dy.
\end{equation*}
Similarly, $M_\delta^Q$, $M^{\#,Q}$ and $M^{\#,Q}_\delta$ are defined in the same way as above.

We introduce the following notation: for a given vector--valued function ${f}=(f_j)_{j=1}^{\infty}$ we denote
$$
|f(x)|_q:= \left(\sum_{j=1}^{\infty} |f_j(x)|^q \right)^{1/q}.
$$ 
Then, the classical \emph{vector-valued extension of the maximal function} introduced by Fefferman and Stein in \cite{FS} can be written as follows:
$$\overline{M}_qf(x)=\Big( \sum_{j=1}^{\infty} (Mf_j(x))^q \Big)^{1/q}=|Mf(x)|_q,$$
where ${f}=\{f_j\}_{j=1}^{\infty}$ is a vector--valued function.

Within the multilinear setting, the appropriate maximal function $\mathcal{M}$ for a $m$-vector $\vec{f}\,$ of $m$ functions $\vec{f}\,=(f_{1},\dots,f_{m})$ is defined as
\begin{equation}\label{eq:maxmultilinear}
\mathcal{M}(\vec{f}\,)(x)=\sup_{Q\ni x} \prod_{i=1}^m\frac{1}{|Q|}\int_{Q}|f_{i}(y_{i})|\ dy_{i}.
\end{equation}
Note that this operator is pointwise smaller than the $m$-fold product of $M$. This maximal operator was introduced in in \cite{LOPTTG} where it is shown that is the ``correct'' maximal operator controlling the multilinear C--Z operators.

\subsection{Calder\'on--Zygmund operators}
We will use standard well known definitions, see for instance \cite{Journe,GrafakosCMF}. Let $K(x,y)$ be a locally integrable function defined of the diagonal $x=y$ in $\mathbb{R}^n\times \mathbb{R}^n$, which satisfies the size estimate
\begin{equation}\label{eq:tamano}
|K(x,y)|\leq {\frac{c}{|x-y|^n}},
\end{equation}
and for some $\varepsilon>0$, the regularity condition
\begin{equation}\label{eq:regularidad}
|K(x,y)-K(z,y)|+|K(y,x)-K(y,z)|\leq
c{\frac{|x-z|^{\varepsilon}}{|x-y|^{n+\varepsilon}}},
\end{equation}
whenever $2|x-z|<|x-y|$.

A linear operator $T:C_{c}^{\infty}(\mathbb{R}^n)\longrightarrow L_{loc}^{1}(\mathbb{R}^n)$ is a \emph{Calder\'{o}n--Zygmund operator} if it extends
to a bounded operator on $L^2(\mathbb{R}^n)$, and there is a kernel $K$ satisfying \eqref{eq:tamano} and \eqref{eq:regularidad} such that
\begin{equation}
Tf(x)=\int_{\mathbb{R}^n}K(x,y)f(y)\, dy,
\end{equation}
for any $f\in C_{c}^{\infty}(\mathbb{R}^n)$ and $x\notin supp(f)$.

Given a C--Z operator $T$ we define as usual the vector--valued extension $\overline{T}_{q}$ as
\[
\overline{T}_{q}f(x)=
\left(
\sum_{j=1}^{\infty} \absval{ Tf_{j}(x) }^{q}
\right)^{1/q}=|Tf(x)|_q,
\]
where ${f}=\{f_j\}_{j=1}^{\infty}$ is a vector--valued function.

We will also study the problem in the \emph{multilinear} setting, considering multilinear C--Z operators acting on pro\-duct Lebesgue spaces. Let $T$ be an operator initially defined on th $m$-fold product of Schwartz spaces and taking values into the space of tempered distributions,
\begin{equation*}
T:\mathcal{S}(\mathbb{R}^n)\times\dots\times\mathcal{S}(\mathbb{R}^n)\to \mathcal{S}'(\mathbb{R}^n).
\end{equation*}
We say that $T$ is an $m$-linear C--Z operator if, for some $1\le q_j <\infty$, it extends to a bounded multilinear operator from $L^{q_1}\times\dots\times L^{q_m}$ to $L^q$, where $\frac{1}{q}=\frac{1}{q_1}+\dots + \frac{1}{q_m}$ and if there exists a function $K$ defined off the diagonal $x=y_1=\dots=y_m$ in $(\mathbb{R}^n)^{m+1}$, satisfying
\begin{equation*}
 T(f_1,\dots,f_m)(x)=\int_{(\mathbb{R}^n)^m}K(x,y_1,\dots,y_m)f_1(y_1)\dots f_m(y_m)\ dy_1\dots dy_m
\end{equation*}
for all $x\notin\bigcap_{j=1}^m \text{supp}f_j$. We refer to   \cite{GT} and \cite{LOPTTG} for a detailed treatment of these operators.

\subsection{Commutators}
Let $T$ be any operator and let $b$ be any locally integrable function. The \emph{commutator operator} $[b, T]$ is defined by
$$
[b,T]f=b\,T(f)-T(bf).
$$
If  $b\in BMO$ and $T$ is a C-Z operators these operators were considered by Coifman, Rochberg and Weiss.  These operators  are  more singular than a C--Z operator, a fact that can be seen from  the following version of the classical result of Coifman and Fefferman \eqref{eq:roughC-F} for commutators proved by the second author in \cite{perez97}.   One of the main points of this paper is  that there is an intimate connection between these commutators and iterations of the Hardy-Littlewood maximal operator.

An important point is that these operators are not of weak type $(1,1)$, but we do have the following substitute inequality.
\begin{theorem}\cite{Perez95:JFA} \label{thm:weakcomm}
Let $b$ be a $BMO$ function and let $T$ be a C-Z operator. Defined the function $\phi(t)$ as follows $\phi(t)= t(1+\log^{+}t)$, there exists a positive constant $c=c_{\|b\|_{BMO}}$ such that for all compactly supported function $f$ and for all $\lambda>0$,
$$
\left|\{x\in \mathbb{R}^n: |[b,T]f(x)|>\lambda\}\right| \leq c\, \int_{\mathbb{R}^n}\phi\left(\dfrac{|f(x)|}{\lambda}\right)\,dx.
$$
\end{theorem}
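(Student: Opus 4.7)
The plan is to follow the classical Calder\'on--Zygmund decomposition adapted to the Orlicz structure of $\phi(t)=t(1+\log^+ t)$, with the interaction between the BMO function $b$ and the bad part of $f$ mediated by the John--Nirenberg inequality and Orlicz duality. By homogeneity of the BMO norm (replace $b$ by $b/\|b\|_{BMO}$) and scaling in $\lambda$, I may normalize $\|b\|_{BMO}=1$ and $\lambda=1$, so the target becomes $|\{|[b,T]f|>1\}|\le c\int\phi(|f|)$.

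First, perform the Calder\'on--Zygmund decomposition of $f$ at height $1$: this produces disjoint cubes $\{Q_j\}$ with $1<|Q_j|^{-1}\int_{Q_j}|f|\le 2^n$, and a splitting $f=g+h$ where $\|g\|_\infty\le C$ and $h=\sum_j h_j$ with $h_j=(f-f_{Q_j})\chi_{Q_j}$ mean zero. Let $\Omega^*=\bigcup_j 2\sqrt{n}\,Q_j$; then $|\Omega^*|\lesssim \sum_j|Q_j|\le \|f\|_1\le \int\phi(|f|)$, which disposes of that piece for free. The good part is handled by the Coifman--Rochberg--Weiss $L^2\to L^2$ bound $\|[b,T]g\|_2\lesssim \|b\|_{BMO}\|g\|_2$, combined with Chebyshev and the trivial $\|g\|_2^2\lesssim \|g\|_\infty\|g\|_1\lesssim \|f\|_1$.

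For the bad part on $(\Omega^*)^c$, I split $[b,T]h_j=(b-b_{Q_j})Th_j-T((b-b_{Q_j})h_j)$. The first summand is controlled in $L^1((\Omega^*)^c)$ using the mean-zero cancellation of $h_j$: the kernel smoothness gives $|Th_j(x)|\lesssim \ell(Q_j)^{\varepsilon}|x-c_j|^{-n-\varepsilon}\|h_j\|_1$ for $x\notin 2\sqrt n Q_j$, and a dyadic annuli sum using the standard John--Nirenberg consequence $|2^kQ_j|^{-1}\int_{2^kQ_j}|b-b_{Q_j}|\lesssim k\,\|b\|_{BMO}$ produces $\int_{(\Omega^*)^c}|(b-b_{Q_j})Th_j|\lesssim |Q_j|$. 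Summing in $j$ and applying Chebyshev handles this piece with total mass $\lesssim \|f\|_1\le \int\phi(|f|)$.

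The crucial (and hardest) piece is $G:=\sum_j(b-b_{Q_j})h_j$, which has lost the cancellation of $h_j$, so the kernel smoothness of $T$ is now useless. My key move is to abandon any attempt at an $L^1$ bound on $TG$, and instead invoke the \emph{weak-type} $(1,1)$ bound for $T$: it suffices to show $\|G\|_1\lesssim \int\phi(|f|)$. Here is where $\phi$ enters the right-hand side, through the generalized H\"older inequality on Orlicz spaces paired with John--Nirenberg: $|Q_j|^{-1}\int_{Q_j}|b-b_{Q_j}||h_j|\lesssim \|b-b_{Q_j}\|_{\exp L,Q_j}\|h_j\|_{L\log L,Q_j}\lesssim \|h_j\|_{L\log L,Q_j}$, followed by the routine Luxemburg-norm comparison $|Q_j|\|h_j\|_{L\log L,Q_j}\lesssim \int_{Q_j}\phi(|f|)+|Q_j|$ and summation. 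The main obstacle is precisely identifying this step: the natural impulse to seek a strong $(1,1)$ bound fails (since $T$ is not $L^1$-bounded), and one must see that exactly the weak $(1,1)$ endpoint of $T$, combined with the Orlicz duality between $\exp L$ (where $b-b_Q$ lives by John--Nirenberg) and $L\log L$, is what produces the function $\phi(t)=t(1+\log^+ t)$ on the right.
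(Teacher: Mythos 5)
You should note first that the paper does not prove Theorem \ref{thm:weakcomm} at all: it is imported verbatim from \cite{Perez95:JFA} and used as a black box (to bound the median value $m_{[b,T]f}(Q)$ in the proof of Theorem \ref{thm:cflocalcom}), so there is no internal argument to compare against. Your proposal is correct, and it is essentially the classical Calder\'on--Zygmund--decomposition proof of the cited endpoint result: normalize $\|b\|_{BMO}=1$ and $\lambda=1$ (legitimate, since the constant is allowed to depend on $\|b\|_{BMO}$), treat the good part via the Coifman--Rochberg--Weiss $L^2$ bound with $\|g\|_2^2\lesssim\|g\|_\infty\|g\|_1\le C\|f\|_1$, use the mean-zero cancellation of $h_j$ plus the John--Nirenberg growth $\frac{1}{|2^kQ_j|}\int_{2^kQ_j}|b-b_{Q_j}|\lesssim k$ for the term $(b-b_{Q_j})Th_j$ off the dilated cubes, and -- the decisive point you identified correctly -- apply only the weak $(1,1)$ bound of $T$ to $G=\sum_j(b-b_{Q_j})h_j$, estimating $\|G\|_1$ by the generalized H\"older inequality for the dual pair $\exp L$--$L\log L$ together with John--Nirenberg; this is precisely where $\phi(t)=t(1+\log^+t)$ must appear, and your Luxemburg-norm bookkeeping $|Q_j|\,\|h_j\|_{L\log L,Q_j}\lesssim |Q_j|+\int_{Q_j}\phi(|f|)$ combined with $\sum_j|Q_j|\le\|f\|_1\le\int\phi(|f|)$ closes the estimate. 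The only details worth writing out in a final version are routine: split the level $1$ among the three pieces, observe that $(\Omega^*)^c\subset(2\sqrt n\,Q_j)^c$ for every $j$ so the annuli sum applies, and bound $\|h_j\|_{L\log L,Q_j}\le\|f\|_{L\log L,Q_j}+C$ using $|f_{Q_j}|\le 2^n$.
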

A natural generalization of the commutator $[b,T]$ is given by $T^k_{b}:=[b,T_{b}^{k-1}]$,\,   $k\in\mathbb{N}$ and more explicitly by,
\begin{equation*}
T^k_b f(x)=\int_{\mathbb R^n} (b(x)-b(y))^k K(x,y)f(y)\,dy.
\end{equation*}
We call them \emph{higher order commutators} and the case $k=0$ recaptures the Calder\'on--Zygmund singular integral operator, and for $k=1$ we get the commutator operator defined before.  It is shown in  \cite{perez97} that for any $0<p<\infty$ and any $w\in A_{\infty}$ there is a constant $C$ such that
%
%
Again, this inequality is sharp since, $M^{k+1}$ can not be replaced by the smaller operator $M^k$.

\subsection{Littlewood-Paley square functions. }

Let $\mathcal{D}$ denote the collection of dyadic cubes in $\mathbb{R}^n$.
Given $Q\in \mathcal{D}$, let $\widehat{Q}$ be its dyadic parent, i.e.,  the
unique dyadic cube containing $Q$  such that $|\widehat{Q}|=2^n |Q|$. The dyadic square function is the operator
$$S_df(x) = \left(\sum_{Q\in\mathcal{D}}(f_Q-f_{\widehat{Q}})^2\chi_Q(x)\right)^{1/2},$$
where as usual $f_Q$ denotes the average of $f$ over $Q$.
For the properties of the dyadic square function we refer the reader
to Wilson~\cite{wilson-LNM}.

We will also use the following continuous and more classical version of the square function:
\begin{equation}\label{eq:LPsquare}
g_{\strt{1.7ex}\lambda}^*(f)(x) = \left( \int_{0}^{\infty}
\int_{\mathbb{R}^n} |\phi_t*f(y)| ^2
\left(\frac{t}{t+|x-y|}\right)^{n\lambda} \frac{dy\,dt}{t^{n+1}}
\right)^{1/2},
\end{equation}
where $\phi\in\mathcal S$, $\int\phi\,dx=0$, $\phi_t(x)=\frac{1}{t^n}\phi(\frac{x}{t})$, and $\lambda>2$ (see \cite{Stein-PrincetonBook}).

\subsection{Lerner's formula}In this subsection, we will state a result from \cite{Lerner:formula} which will be fundamental in our proofs.  This result is known as ``Lerner's formula'', and allows to obtain a decomposition of a function $f$ that can be seen as a sophisticated  Calder\'on--Zygmund decomposition of that function at all scales.

In order to state Lerner's result, we need to introduce the main objects involved. For a 
given a cube  $Q$, the median value $m_f(Q)$ of $f$ over $Q$ is a, possibly non-unique, number such that
$$|\{x\in Q:f(x)>m_f(Q)\}|\le |Q|/2$$
and
$$|\{x\in Q:f(x)<m_f(Q)\}|\le |Q|/2.$$
The mean local oscillation of a measurable function $f$ on a cube $Q$ is defined by the following expression
$$
\omega_{\lambda}(f;Q)=\inf_{c\in \mathbb{R}}((f-c)\chi_{Q})^{*}(\lambda |Q|),
$$
for all $0<\lambda<1$, and the local sharp maximal function on a fixed cube $Q_0$ is defined as
$$
M^{\#}_{\lambda;Q_0}f(x)=\sup_{x\in Q\subset Q_0}\omega_{\lambda}(f;Q),
$$
where the supremum is taken over all cubes $Q$ contained in $Q_0$ and such that $x\in Q$. Here $f^*$ stands for the usual non-increasing rearrangement of $f$. We will use several times that for any $\delta>0$, and $0<\lambda \le 1$,
\begin{equation}\label{eq:rearravg-delta}
(f\chi_{\strt{1.7ex}Q })^*(\lambda |Q|)\le \left(\frac{1}{\lambda|Q|}\int_Q|f|^{\delta}\,dx\right)^{1/{\delta}},
\end{equation}
and, as a consequence, that
\begin{equation}\label{eq:medianVSnormdelta}
|m_f(Q)|\le \left(\frac{2}{|Q|}\int_Q|f(x)|^{\delta}\,dx\right)^{1/\delta},
\end{equation}
for any $\delta>0$.

Recall that, for a fixed cube $Q_0$, $\mathcal{D}(Q_0)$ denotes all the dyadic subcubes with respect to the cube $Q_0$. As before, if $Q\in \mathcal{D}(Q_0)$ and $Q\neq Q_0$, $\widehat{Q}$ will be the ancestor dyadic cube of $Q$, i.e., the only cube in $\mathcal{D}(Q_0)$ that contains $Q$ and such that $|\widehat{Q}|=2^n |Q|$. We state  now  Lerner's formula. 
\begin{theorem}\cite{Lerner:formula}\label{thm:andreiformula} Let f be a measurable function on $\mathbb{R}^n$ and let $Q_0$ be a cube. Then
there exists a (possibly empty) collection of cubes $\{Q_j^k\}_{j,k}\in \mathcal{D}(Q_0)$ such that:
\begin{enumerate}
  \item[(i)] For a.e. $x\in Q_0$,
   \begin{equation}\label{eq:aformula}
   |f(x)-m_f(Q_0)|\leq 4\,M_{1/4; Q_0}^{\#}f(x)+4\,\sum_{k=1}^{\infty}\sum_{j}\omega_{1/2^{n+2}}(f;\hat{Q}_j^k)\chi_{Q_j^k}(x);
   \end{equation}
  \item[(ii)] For each fixed $k$ the cubes $Q_j^k$ are pairwise disjoint;
  \item[(iii)] If $\Omega_k={\displaystyle{\bigcup_{j}}}Q_j^k$, then $\Omega_{k+1}\subset\Omega_k$;
  \item[(iv)] $|\Omega_{k+1}\cap Q_j^k|\leq {\frac1{2}}|Q_j^k|$.
\end{enumerate}
\end{theorem}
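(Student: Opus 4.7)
The plan is to prove the decomposition via a Calder\'on--Zygmund-type stopping-time construction on the dyadic subcubes of $Q_0$, coupled with a telescoping identity for medians along nested chains. Two elementary ingredients underlie the whole argument: first, the rearrangement inequality
$$|\{y\in Q : |f(y)-m_f(Q)|>\omega_{\lambda}(f;Q)\}|\le 2\lambda\,|Q|,$$
which is immediate from the definition of $\omega_\lambda$; and second, the monotonicity of medians, namely that if $|\{y\in P : |f(y)-c|\le \tau\}|>|P|/2$, then $|m_f(P)-c|\le \tau$.

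\textbf{Construction of the cubes.} I set $Q_1^0:=Q_0$ and proceed inductively. Given the cubes $\{Q_j^k\}_j$, I let $\{Q_i^{k+1}\}_i$ be the collection of maximal dyadic subcubes $Q\subsetneq Q_j^k$, taken over all $j$, such that
$$|\{y\in Q : |f(y)-m_f(Q_j^k)|>\omega_{1/2^{n+2}}(f;Q_j^k)\}|>\tfrac{1}{2}|Q|.$$
Maximality inside each $Q_j^k$ gives (ii); the inclusion $Q_i^{k+1}\subset Q_j^k$ gives (iii). For (iv), the rearrangement inequality bounds the global bad set in $Q_j^k$ by $2\cdot 2^{-n-2}|Q_j^k|=2^{-n-1}|Q_j^k|$; since each selected $Q_i^{k+1}$ has more than half of its mass in this bad set and the $Q_i^{k+1}$ contained in $Q_j^k$ are disjoint, $|\Omega_{k+1}\cap Q_j^k|\le 2\cdot 2^{-n-1}|Q_j^k|=2^{-n}|Q_j^k|\le \tfrac{1}{2}|Q_j^k|$. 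Iterating yields $|\Omega_k|\le 2^{-k}|Q_0|$, and hence $|\bigcap_k \Omega_k|=0$.

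\textbf{Telescoping and pointwise bound.} For a.e.\ Lebesgue point $x\in Q_0\setminus\bigcap_k\Omega_k$, let $N(x)$ be the largest level with $x\in\Omega_{N(x)}$ and let $Q^k(x)$ denote the unique selected cube at level $k$ containing $x$, for $0\le k\le N(x)$. I then telescope:
$$f(x)-m_f(Q_0)=\bigl(f(x)-m_f(Q^{N(x)}(x))\bigr)+\sum_{k=1}^{N(x)}\bigl(m_f(Q^k(x))-m_f(Q^{k-1}(x))\bigr).$$
Each median jump is controlled by the non-selection of the dyadic parent $\widehat{Q^k(x)}$ of $Q^k(x)$: the reverse of the stopping criterion, combined with the second ingredient, controls $|m_f(\widehat{Q^k(x)})-m_f(Q^{k-1}(x))|$, while the dyadic scaling $|Q^k(x)|=2^{-n}|\widehat{Q^k(x)}|$ together with the rearrangement inequality applied inside $\widehat{Q^k(x)}$ gives $|m_f(Q^k(x))-m_f(\widehat{Q^k(x)})|\le \omega_{1/2^{n+2}}(f;\widehat{Q^k(x)})$. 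After adjusting constants, each median jump contributes at most $4\,\omega_{1/2^{n+2}}(f;\widehat{Q^k(x)})$, producing exactly the series in (i). For the remaining term $|f(x)-m_f(Q^{N(x)}(x))|$, every dyadic subcube of $Q^{N(x)}(x)$ containing $x$ fails the stopping criterion (since $x\notin\Omega_{N(x)+1}$); a Lebesgue-point limit through these shrinking subcubes, together with $\omega_{1/4}(f;Q)\le M^{\#}_{1/4;Q_0}f(x)$ for every such $Q$, yields the terminal bound $|f(x)-m_f(Q^{N(x)}(x))|\le 4\,M^{\#}_{1/4;Q_0}f(x)$.

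\textbf{Main obstacle.} The central difficulty is the careful bookkeeping of constants: the threshold $1/2^{n+2}$ in the stopping rule is chosen so that (iv) holds with factor $1/2$, whereas the terminal term naturally lives with the parameter $1/4$ inside $M^{\#}_{1/4;Q_0}$. Passing from $\omega_\lambda$ on a cube to $\omega_\lambda$ on its dyadic child---which enters when bounding $|m_f(Q^k(x))-m_f(\widehat{Q^k(x)})|$---requires using the dyadic scaling exactly at the right moment, since $1-2\lambda$ must exceed $1-2^{-n}$ for the rearrangement estimate to leave more than half of the child cube ``good''. Similarly, a Lebesgue-point argument at the terminal level must be made robust enough to survive the discrepancy between $\omega_{1/2^{n+2}}$ (appearing in the selection) and $\omega_{1/4}$ (appearing in the sharp maximal function).
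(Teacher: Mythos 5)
Your overall strategy (a stopping--time selection of dyadic cubes plus a telescoping of medians along the chain of selected cubes) is indeed the right skeleton, and items (ii)--(iv) come out correctly from it. But there is a genuine gap in how the two oscillation parameters are distributed, and it sits exactly at the two places where estimate (i) has to be produced. Your stopping rule measures the bad set against the threshold $\omega_{2^{-n-2}}(f;Q_j^k)$ of the \emph{current stopping cube}. Consequently: (a) at the terminal step the only bound your rule can give (by Lebesgue density applied to the fixed bad set) is $|f(x)-m_f(Q^{N}(x))|\lesssim \omega_{2^{-n-2}}(f;Q^{N}(x))$, and this is \emph{not} dominated by $C\,M^{\#}_{1/4;Q_0}f(x)$: on a fixed cube $\omega_{2^{-n-2}}\ge\omega_{1/4}$, and no cube $R\supseteq Q^{N}(x)$ admissible in $M^{\#}_{1/4;Q_0}$ can compensate, since $\tfrac14|R|\ge\tfrac14|Q^{N}(x)|>2^{-n-2}|Q^{N}(x)|$. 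For instance, if $f=A\chi_S$ with $S$ a set of measure slightly larger than $2^{-n-2}|Q_0|$ sitting in a corner of $Q_0$ and $x$ far from $S$, then $\omega_{2^{-n-2}}(f;Q_0)\ge A/2$ while $M^{\#}_{1/4;Q_0}f(x)=0$; so the inequality you need at good points is simply false, and failure of the selection criterion on small cubes containing $x$ says nothing about $\omega_{1/4}$ of those cubes, contrary to what your terminal step assumes. (b) In the jump estimate, non-selection of $\widehat{Q^k(x)}$ gives $|m_f(\widehat{Q^k(x)})-m_f(Q^{k-1}(x))|\le \omega_{2^{-n-2}}(f;Q^{k-1}(x))$, i.e.\ an oscillation of the \emph{previous stopping cube}, which cannot be converted ``after adjusting constants'' into $C\,\omega_{2^{-n-2}}(f;\widehat{Q^k(x)})$: oscillations are not monotone in that direction (a spike supported in $Q^{k-1}(x)$, of measure between $2^{-n-2}|Q^{k-1}(x)|$ and $\tfrac14|Q^{k-1}(x)|$ and located away from $\widehat{Q^k(x)}$, makes the former large and the latter zero), nor does it fit the budget terms $\omega_{2^{-n-2}}(f;\widehat{Q^{k-1}(x)})$ or $M^{\#}_{1/4;Q_0}f(x)$. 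A smaller point: your ``first ingredient'' is false as stated for an arbitrary choice of median (take $f=\chi_{(1/2,1]}$ on $[0,1]$, $\lambda=1/8$, median $0$); the correct form puts the factor $2$ on the oscillation, $|\{y\in Q:|f(y)-m_f(Q)|>2\omega_\lambda(f;Q)\}|\le\lambda|Q|$ for $\lambda<1/2$.

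The repair, which is essentially Lerner's argument in \cite{Lerner:formula}, is to run your stopping time with the threshold $2\omega_{1/4}(f;Q_j^k)$, i.e.\ with the parameter $1/4$ attached to the current stopping cube; the parameter $2^{-n-2}$ must enter only through two elementary ``parent'' inequalities. Note first that (iv) is unaffected: the bad set then has measure at most $\tfrac14|Q_j^k|$ and the selected cubes, having more than half their mass in it, cover at most $\tfrac12|Q_j^k|$. With this rule the terminal bound becomes $2\omega_{1/4}(f;Q^{N}(x))\le 2M^{\#}_{1/4;Q_0}f(x)$, as required. For the jumps, combine (1) the always-valid estimate $|m_f(P)-m_f(\widehat P)|\le 2\omega_{2^{-n-2}}(f;\widehat P)$ (because $2^{-n-2}|\widehat P|=\tfrac14|P|$, so more than half of both $P$ and $\widehat P$ lies where $|f-c|$ is at most the relevant rearrangement), and (2) the dyadic-scaling inequality $\omega_{1/4}(f;P)\le\omega_{2^{-n-2}}(f;\widehat P)$, valid for every dyadic child $P$ of $\widehat P$. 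Non-selection of $\widehat{Q^k(x)}$ now yields $|m_f(\widehat{Q^k(x)})-m_f(Q^{k-1}(x))|\le 2\omega_{1/4}(f;Q^{k-1}(x))$, and by (2) this carried term is absorbed into $2\omega_{2^{-n-2}}(f;\widehat{Q^{k-1}(x)})$ when $k\ge2$ (and into $2M^{\#}_{1/4;Q_0}f(x)$ when $k=1$), while (1) handles the remaining piece $|m_f(Q^k(x))-m_f(\widehat{Q^k(x)})|$; totalling the chain gives exactly the constants $4$ and $4$ of \eqref{eq:aformula}. So the difficulty you flagged in your last paragraph is real, but it is not a matter of making a Lebesgue-point argument ``robust'': with the parameter $2^{-n-2}$ in the selection rule the needed inequalities are false, and the roles of $1/4$ and $2^{-n-2}$ have to be exchanged as above.
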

Let us remark that in any decomposition as in the previous theorem, if we define 
$E_j^k:=Q_j^k\backslash\Omega_{k+1}$, then we have that $\{E_j^k\}$ is a pairwise disjoint subsets family. Moreover, 
  \begin{equation}\label{eq:remark-Lerner}
   |Q_j^k|\leq 2|E_j^k|.
  \end{equation}

\subsection{Pointwise inequalities}

In this section we will summarize some important pointwise inequalities involving sharp maximal functions. We start with the following, which is an immediate consecuence of the definitions. Given a cube $Q$, $\delta>0$ and $0<\lambda\le 1$, there exists a constant $c=c_\lambda$ such that
\begin{equation}\label{eq:localsharp-VS-sharp}
M_{\lambda;Q}^{\#}(f\chi_Q)(x)\leq c\, M_\delta^{\#} (f\chi_Q)(x),
\end{equation}
for all $x\in Q$.
We will also use the following result from \cite{O}.
If $0<\delta<\varepsilon<1$, there is a constant $c=c_{\varepsilon,\delta}$ such that
\begin{equation}\label{eq:dosmaximales}
M^{\#,d}_{\delta}(M^d_{\varepsilon}(f))(x)\le c \,M^{\#,d}_{\varepsilon}f(x).
\end{equation}

The idea behind the following list of inequalities is that a sharp maximal type operator acting on several singular operators can be controlled by suitable maximal operators. 

{\it Calder\'on--Zygmund operators and vector valued extensions:}
Let $T$ be a Calder\'on--Zygmund operator  with maximal singular operator $T^*$, and $0<\varepsilon<1$. Then there exists a constant $c=c_{\varepsilon}$ such that
\begin{equation}\label{eq:MsharpT-vs-Mf}
M_{\varepsilon}^{\#}(T^*f)(x)\leq c\,Mf(x).
\end{equation}
This follows essentially from \cite{AP94} where $T$ is used instead of $T^*$. Moreover, we know from 
\cite{PTG} that if 
$1<q<\infty$ and $0<\varepsilon<1$, then there
exists a constant $c=c_{\varepsilon}>0$ such that
\begin{equation}\label{eq:vectorialmaximalsharp}
M_{\varepsilon}^{\#}(\overline T_qf)(x)\leq c\,M\left(|f|_q\right)(x) \quad x\in\mathbb{R}^n
\end{equation}
for any smooth vector function $f=\{f_j\}_{j=1}^{\infty}$.

{\it Multilinear C--Z operators.} \cite{LOPTTG}
Let $T$ be a  Calder\'on-Zygmund $m$-linear operator and let $0<\varepsilon<1/m$.  Then there exists a constant $c=c_{\varepsilon}>0$ such that
\begin{equation}\label{eq:multilinmaximalsharp}
M^\#_{\varepsilon}(T(\vec{f}\,))(x)\le c\,\mathcal{M}(\vec{f}\,)(x)\quad x\in\mathbb{R}^n
\end{equation}
for any smooth vector function $\vec{f}$.	

{\it Commutators.} \cite{Perez95:JFA}
Let $b\in BMO$  and let $0 < \delta< \varepsilon$. Then there exists a positive constant $c= c_{\delta,\varepsilon}$ such that,
\begin{equation}\label{eq:mdeltaepsilon}
M_{\delta}^{\#,d}(T^k_bf)(x)\leq c\, \norm
{b}_{BMO}\sum_{j=0}^{k-1}M^d_{\varepsilon}(T^j_b f)(x)+\norm{b}_{BMO}^k M^{k+1}f(x),
\end{equation}
for any $k\in \mathbb{N}$ and for all smooth functions $f$.

{\it Dyadic and continuous square functions.}\cite{CMP-ADV},\cite{Lerner-JFA}
Let $S_d$ be the dyadic square function operator and let  $0<\lambda<1$. Then for any function $f$, every dyadic cube $Q$, and every $x\in Q$,
\begin{equation}\label{eq:squareOscilaDyadic}
\omega_\lambda((S_df)^2,Q) \le \frac{c_n}{\lambda^2}\left(\frac{1}{|Q|}\int_Q |f(x)|\ dx\right)^2,
\end{equation}
and hence
\begin{equation}\label{eq:sharplocal-SquareDyadic}
 M^{\#,d}_{\lambda}(S_{d}(f)^2)  (x) \leq c_{\lambda}\,Mf(x)^2.
\end{equation}
For the continuous square function   $g_{\mu}^{*}$ we use the following from \cite{Lerner-JFA}. 
For $\mu>3$ and $0<\lambda<1$, we have that
\begin{equation}\label{eq:sharplocal-SquareCont}
 M^{\#}_{\lambda}(g_{\mu}^{*}(f)^2)  (x) \leq c_{\lambda}\,Mf(x)^2.
\end{equation}

The analogue for the vector-valued extension of the maximal function, from \cite{CMP-ADV} is the following.
Fix $\lambda$, $0<\lambda<1$ and $1<q<\infty$. Then for any function $f=\{f_j\}_{j=1}^{\infty}$, every dyadic cube $Q$, and every $x\in Q$,
\begin{equation}\label{eq:vectormaximalOscila}
\omega_\lambda\left(\left(\overline{M}^d_qf\right)^q,Q\right) \le \frac{c_{n,q}}{\lambda^q}\left(\frac{1}{|Q|}\int_{Q} \norm{f(x)}_{l^q}\ dx\right)^q
\end{equation}

Finally, we include here the well known Kolmogorov's inequality in the following form.  Let $0<q<p<\infty$. Then there is a constant $c=c_{p,q}$ such that for any nonegative measurable function $f$,
\begin{equation}\label{eq:kolmogorov}
\left(\frac{1}{|Q|}\int_Q f(x)^q\ dx\right)^{\frac{1}{q}}\le c \|f\|_{L^{p,\infty}(Q,\frac{dx}{|Q|})}.
\end{equation}
(See for instance  \cite{GrafakosCF}, p. 91, ex. 2.1.5).

\section{First approach, proof of linear and superlinear estimates}\label{sec:Proofs-first}

We prove in this section Theorem \ref{thm:Feff-Steinish} and the  consequences. The proof is based on Lerner's formula \eqref{eq:aformula} combined with a new way of handling the sparse cubes $\{Q_j^k\}$ by means of an exponential vector valued endpoint estimate due to Fefferman--Stein.

\subsection{Proof of the key estimate}
As already mentioned the proof is based on Lerner's formula from Theorem  \ref{thm:andreiformula}. The drawback of the method is that it is not clear if this approach allows us to derive such sharp exponential decays for the case of commutators. We remark that a slightly weaker result, involving $M^\#_\delta$ instead of the local sharp maximal function was proved by the second author in \cite{CGPSZ}, Chapter 3.

\begin{proof}[of Theorem \ref{thm:Feff-Steinish}]
We consider the distribution set
$$E_Q:=\{x\in Q: |f(x)-m_f(Q)|>tM_{2^{-n-2};Q}^\#(f)(x) \}.$$
Then, by \eqref{eq:aformula} and for appropriate $c$ we have that 
\begin{eqnarray*}
\left|E_Q\right|&\le&|\{x\in Q:  \sum_{k,j}\chi_{Q_j^k}(x)\, \inf_{Q_j^k}M_{2^{-n-2};Q}^\#f  > ctM_{2^{-n-2};Q}^\#f(x)\}|\\
&\leq& |\{x\in Q:  \sum_{k,j}\chi_{Q_j^k}(x)> ct\}|.
\end{eqnarray*}
Let $\{E_j^k\}$ be the family of sets from the remark after Lerner's formula satisfying \eqref{eq:remark-Lerner}. We have then
\begin{eqnarray*}
\sum_{j,k}\chi_{Q_j^k}(x)&=&\sum_{j,k}\left({\frac{1}{|Q_j^k|}}\,|Q_j^k|\right)^q\chi_{Q_j^k}(x)\\
&\leq& c_n^q\,\sum_{j,k}\left({\frac{1}{|Q_j^k|}}\,|E_j^k|\right)^q\chi_{Q_j^k}(x)\\
&\leq& c_n^q\,\sum_{j,k}\left({\frac{1}{|Q_j^k|}}\,\int_{Q_j^k}\chi_{E_j^k}(x)\,dx\right)^q\chi_{Q_j^k}(x)\\
&\leq& c_n^q\, \left(\overline{M}_q\left(\left\{\chi_{E_j^k}\right\}_{j,k}\right)(x)\right)^q\\
&\leq& c_n^q\, \left(\overline{M}_qg(x)\right)^q,
\end{eqnarray*}
where $g=\left\{\chi_{E_j^k}\right\}_{j,k}$.  Now, since $\{E_j^k\}$ is a pairwise disjoint family of subsets, we have that
\begin{equation}
\|g(x)\|_{\ell^{q}}=\left(\sum_{j,k}\left(\chi_{E_j^k}(x)\right)^q\right)^{1/q}\leq 1,
\end{equation}
We finish our proof recalling that if $|g|_{\ell^{q}}\in L^{\infty}$, then $\left(\overline{M}_qg(x)\right)^q\in Exp L$ (see \cite{FS}). Therefore, we obtain the desired inequality \eqref{eq:karagulyan-feff-stein}:
\begin{equation*}
|\{x\in Q: |f(x)-m_f(Q)|>tM_{2^{-n-2};Q}^\#(f)(x) \}|\le c e^{-\alpha t}|Q|,\qquad t>0.
\end{equation*}
\end{proof}

\subsection{Proofs for Calder\'on--Zygmund operators, vector valued extensions and  multilinear C--Z operators - First approach}

We will combine  Theorem \ref{thm:Feff-Steinish}, replacing $f$ by the operator, with an appropriate pointwise inequality. We start by proving  Theorem \ref{thm:singinteg}.

\begin{proof}[of Theorem \ref{thm:singinteg}]
We first note the following estimate for the median value of $T^*f$ over a cube $Q$. We have that
\begin{eqnarray*}
m_{T^*f}(Q) & \leq &  \left(\frac{2}{|Q|}\int_{Q}(T^*f)^{\delta}\right)^{1/\delta}\\
& \le & c_{\delta}\|T^*f\|_{L^{1,\infty}(Q,\frac{dx}{|Q|})}\leq {\frac{c}{|Q|}}\int_{Q}|f(x)|\,dx\\
\end{eqnarray*}
by Kolmogorov's inequality \eqref{eq:kolmogorov}.
It follows that
\begin{equation}\label{eq:medianT-vs-M}
m_{T^*f}(Q) \le  cMf(x), \qquad x\in Q.
\end{equation}
This, together with inequality \eqref{eq:localsharp-VS-sharp} and \eqref{eq:MsharpT-vs-Mf}, yields
\begin{equation*}
\left|\left\{x\in Q: \frac{|T^*f(x)|}{Mf(x)}>t \right\}\right|\le |\{x\in Q: |T^*f(x)|>ctM_{\lambda_n,Q}^\#(T^*(f))(x) \}|
\end{equation*}
for $\lambda_n=2^{-n-2}$ for some constant $c>0$. We can apply now our general result from Theorem \ref{thm:Feff-Steinish} to conclude the proof.

\end{proof}

For the proof of Theorem \ref{thm:multisinginteg} and Theorem \ref{thm:vectorvaluedSingular}, we have all the ingredients: we use (respectively) inequalities \eqref{eq:multilinmaximalsharp} and \eqref{eq:vectorialmaximalsharp} instead of \eqref{eq:MsharpT-vs-Mf} and we control the median value by using Kolmogorov's inequality and the weak type of both vector-valued extensions and multilinear C--Z operators.

\subsection{Proof for the square functions and for the vector-valued maximal function - First approach}

For the proof of Theorem \ref{thm:squarefunction} we  start with:
\begin{equation*}
|\{x\in Q: Sf(x)>tMf(x) \}|=|\{x\in Q: (Sf(x))^2>t^2(Mf(x))^2 \}|.
\end{equation*}
and we use this time  estimates \eqref{eq:sharplocal-SquareDyadic} and \eqref{eq:sharplocal-SquareCont} for the pointwise control. The median value of the square function is also bounded by $M$ as in the previous cases using, again, Kolmogorov's inequality and the weak $(1,1)$ type of the operator. From Theorem \ref{thm:Feff-Steinish} we will obtain, in this case, a Gaussian decay rate for the level set. 

Finally, for the vector-valued extension of the Maximal function, we proceed as in the case of the square function but replacing the ``2'' by ``$q$''. The key estimate for the oscillation is in inequality \eqref{eq:vectormaximalOscila}.

\section{Second approach - Weighted estimates and the proof for commutators}\label{sec:Proofs-second}

As already mentioned the approach considered in the previous section cannot be used in the case of commutators.  We introduce here a new approach, combining  Lerner's formula with a variant of Rubio de Francia's algorithm. In this case, Lerner's formula is used to derive a certain sharp local weighted estimate (see Theorem \ref{thm:CFlineal}). This is the first key ingredient. The second key ingredient is to apply Rubio de Francia's algorithm with a factorization argument for $A_q$ weights and the use of Coifman--Rochberg theorem (see lemma \ref{lem:multi-CR}).

This approach will allow us to derive all the results of this paper, including those proved in the previous section, and also the results for commutators. We will present the general scheme in terms of a pair of generic operators $T_1$ and $T_2$ and then emphasize the different kind of hypothesis needed and the estimates obtained on each case.

We start with some preliminaries about weights. We include some classical well known results and some new ones.

\subsection{Some extra preliminary on weights.}
We recall that a weight $w$ (any non negative measurable function) satisfies the $A_p$ condition for $1<p<\infty$ if
\begin{equation*}
[w]_{A_p}=  \left( {\displaystyle{\frac1{|Q|}}}\int_{Q}w \right)
\left({\displaystyle{\frac1{|Q|}}}\int_{Q}w
^{1-p'}\right)^{p-1}<\infty.
\end{equation*}
Also we recall that $w$ is an $A_1$ weight if there is a finite constant $c$ such that $Mw\le c\,w$ a.e., and where $[w]_{A_1}$ denotes the smallest of these $c$. Also, we recall that the $A_{\infty}$ class of weights is defined by  $A_{\infty}=\bigcup_{{p\geq 1}}A_p$.

We will use that if $w_1$\, and \,$w_2$\, are $A_1$  weights then $w=w_1w_2^{1-p} \in A_p$
and
\begin{equation}\label{eq:pro1peso}
[w]_{A_p}\leq [w_1]_{A_1}[w_2]_{A_1}^{p-1}.
\end{equation}

Another key feature of the $A_1$ weights that we will use repeatedly is that $(M\mu)^\delta$ is an $A_1$ weight whenever $0<\delta<1$ and $\mu$ is positive Borel measure (this is due to Coifmann and Rochberg, \cite[Theorem 3.4]{GCRdF}). Furthermore we have
\begin{equation*}\label{eq:cr}
[(Mf)^{\delta}]_{A_1}\le \frac{c}{1-\delta},
\end{equation*}
where $c=c_n$.
We will need the following extension of this result for the multilinear maximal operator  $\mathcal{M}$ defined in \eqref{eq:maxmultilinear} which may have its own interest.

\begin{lemma}\label{lem:multi-CR}
Let  $\vec{\mu}$ be a vector of $m$ positive Borel measures on $\mathbb{R}^n$ such that $\mathcal{M}\vec{\mu}(x)<\infty$ for a.e. $x\in\mathbb{R}^n$. Then
\begin{equation}
\left(\mathcal{M}(\vec{\mu})\right)^\delta \in A_1 \quad \mbox{ for any }\, \,\, 0<\delta<\frac{1}{m}
\end{equation}
Moreover,
\begin{equation}
\left[ \left(\mathcal{M}(\vec{\mu})\right)^\delta \right]_{A_1}\le \frac{c}{1-m\delta},
\end{equation}
where $c=c_n$ is some dimensional constant.
\end{lemma}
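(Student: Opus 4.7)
The plan is to verify the $A_{1}$ condition by Lebesgue differentiation, i.e., to show that
\begin{equation*}
\frac{1}{|Q|}\int_{Q}(\mathcal{M}\vec\mu)^{\delta}(y)\,dy\;\le\;\frac{c}{1-m\delta}\,(\mathcal{M}\vec\mu)^{\delta}(x)
\end{equation*}
for a.e.\ $x\in Q$ and every cube $Q$. This is the multilinear adaptation of the classical Coifman--Rochberg argument. The two new ingredients are the multilinear weak-type endpoint $\mathcal{M}:L^{1}\times\cdots\times L^{1}\to L^{1/m,\infty}$ from \cite{LOPTTG} and a careful tracking of the constant in Kolmogorov's inequality \eqref{eq:kolmogorov}.

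Fix a cube $Q$ and decompose each $\mu_{i}=\mu_{i}^{1}+\mu_{i}^{2}$ with $\mu_{i}^{1}=\mu_{i}|_{3Q}$ and $\mu_{i}^{2}=\mu_{i}|_{(3Q)^{c}}$. Expanding the product inside the supremum and pulling the sum out,
\begin{equation*}
\mathcal{M}(\vec\mu)(y)\;\le\;\sum_{\vec\alpha\in\{1,2\}^{m}}\mathcal{M}(\mu_{1}^{\alpha_{1}},\dots,\mu_{m}^{\alpha_{m}})(y),
\end{equation*}
and since $0<\delta<1/m\le 1$ the subadditivity $(a+b)^{\delta}\le a^{\delta}+b^{\delta}$ lets me treat the $2^{m}$ pieces separately.

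For the ``all-near'' term $\vec\alpha=(1,\dots,1)$, I would apply Kolmogorov's inequality with $p=1/m$ and $q=\delta$, which carries a constant of order $\bigl(p/(p-q)\bigr)^{1/q}=(1-m\delta)^{-1/\delta}$. The weak-type bound applied to the finite-mass measures $\mu_{i}^{1}$ gives
\begin{equation*}
\|\mathcal{M}(\vec\mu^{\,1})\|_{L^{1/m,\infty}(Q,\,dx/|Q|)}\;\le\;\frac{c}{|Q|^{m}}\prod_{i=1}^{m}\mu_{i}(3Q),
\end{equation*}
and since $x\in Q\subset 3Q$ the definition of $\mathcal{M}$ forces $\prod_{i}\mu_{i}(3Q)\le |3Q|^{m}\mathcal{M}\vec\mu(x)=3^{nm}|Q|^{m}\mathcal{M}\vec\mu(x)$. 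Combining these two bounds and raising to the $\delta$-th power controls the main term by $\frac{c}{1-m\delta}(\mathcal{M}\vec\mu)^{\delta}(x)$.

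For each of the remaining terms, in which at least one $\alpha_{i}=2$, I would prove the pointwise bound $\mathcal{M}(\mu_{1}^{\alpha_{1}},\dots,\mu_{m}^{\alpha_{m}})(y)\le C_{n}\mathcal{M}\vec\mu(x)$ for all $y\in Q$. Any cube $Q'\ni y$ contributing to the left side must satisfy $\mu_{i}^{2}(Q')>0$ for some $i$, hence $Q'\cap(3Q)^{c}\neq\emptyset$; together with $Q'\ni y\in Q$ this forces $\ell(Q')\gtrsim \ell(Q)$, so $Q\cup Q'$ fits inside a cube $\widetilde Q\ni x$ with $|\widetilde Q|\lesssim |Q'|$, whence
\begin{equation*}
\prod_{i}\frac{\mu_{i}^{\alpha_{i}}(Q')}{|Q'|}\;\lesssim\;\prod_{i}\frac{\mu_{i}(\widetilde Q)}{|\widetilde Q|}\;\le\;\mathcal{M}\vec\mu(x).
\end{equation*}
These $2^{m}-1$ contributions add only a bounded multiple of $(\mathcal{M}\vec\mu)^{\delta}(x)$, and together with the main term produce the claim. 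The principal obstacle is the sharp $1/(1-m\delta)$ blow-up as $\delta\nearrow 1/m$: it is recovered precisely because the weak-type exponent of $\mathcal{M}$ equals $1/m$, so that $p-q=1/m-\delta$ and Kolmogorov reproduces exactly the required rate; the hypothesis $\mathcal{M}\vec\mu<\infty$ a.e.\ then makes every quantity in the argument finite.
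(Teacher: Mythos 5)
Your proposal is correct and follows essentially the same route as the paper: split the measures at $3Q$, control the local part via the endpoint bound $\mathcal{M}:L^{1}\times\cdots\times L^{1}\to L^{1/m,\infty}$ (your Kolmogorov step with constant $(1-m\delta)^{-1/\delta}$, hence $(1-m\delta)^{-1}$ after raising to the power $\delta$, is exactly the paper's layer-cake computation with the optimal truncation level $R$), and bound the far contributions pointwise by $c_{n}\,\mathcal{M}\vec{\mu}(x)$. If anything you are more explicit than the paper, which only writes the two pieces $\vec{\mu}^{0},\vec{\mu}^{\infty}$ and leaves implicit the $2^{m}-1$ mixed near/far terms that you handle with the large-cube argument.
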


\begin{proof}
The idea is the same as in the classical Coifman--Rochberg theorem, but using this time the appropriate the weak type boundedness of $\mathcal{M}$:
$$
\mathcal{M}: L^{1}(\mathbb{R}^n) \times  \dots \times L^{1}(\mathbb{R}^n)  \to L^{\frac{1}{m},\infty}(\mathbb{R}^n) .
$$
If \,$w=\left(\mathcal{M}(\vec{\mu})\right)^\delta$, the aim is to prove that, for a given cube $Q$,
$$ \frac{1}{|Q|}\int_Q w(x)\ dx\le \frac{c}{1-m\delta}w(y)\qquad \text{ for all }\ y\in Q.$$
Consider $\tilde{Q}:=3Q$, the dilation of $Q$ and split the vector $\vec{\mu}=\vec{\mu}^0+\vec{\mu}^\infty$ as usual with $\vec{\mu}^0=(\mu^0_1,\dots, \mu^0_m)$ and where $\mu^0_j:=\mu_j\chi_{\tilde{Q}}$ for all $1\le j \le m$. We can handle $\mathcal{M}(\vec{\mu}^\infty)$ as in the $m=1$ case, since in this case the maximal function is essentially constant. For the other part, we have that
\begin{eqnarray*}
\frac{1}{|Q|}\int_Q \left(\mathcal{M}(\vec{\mu}^0)(x)\right)^\delta \ dx & = & \frac{\delta}{|Q|}\int_0^\infty t^{\delta}\left|\left\{x\in Q: \mathcal{M}(\vec{\mu}^0)(x)^\delta >t \right\}\right| \ \frac{dt}{t}\\
 & \le & R^\delta + \frac{\delta}{|Q|}\int_R^\infty t^{\delta}\left|\left\{\mathcal{M}(\vec{\mu}^0)(x)^\delta >t \right\}\right| \ \frac{dt}{t}
\end{eqnarray*}
for any $R>0$ since we trivially have that $\left|\left\{x\in Q: \mathcal{M}(\vec{\mu}^0)(x)^\delta >t \right\}\right|\le |Q|$. Now, we recall
 that $\mathcal{M}$ is a bounded operator from $L^1\times\dots\times L^1 \to L^{1/m,\infty}$. Therefore we can estimate the last integral as
\begin{eqnarray*}
\frac{\delta}{|Q|}\int_R^\infty t^{\delta}\left|\left\{\mathcal{M}(\vec{\mu}^0)(x)^\delta >t \right\}\right| \ \frac{dt}{t}& \le  & 	c \frac{\delta}{|Q|}\int_R^\infty t^{\delta-1-\frac{1}{m}}\ dt \prod_{j=1}^m \|\mu^0_j\|^{1/m}_{L^1}\\
&\le & \frac{c }{1-m\delta}\frac{R^{\delta-\frac{1}{m}}}{|Q|}\prod_{j=1}^m \|\mu^0_j\|^{1/m}_{L^1}
\end{eqnarray*}
for any $\delta<\frac{1}{m}$.  We obtain that
\begin{equation*}
\frac{1}{|Q|}\int_Q \left(\mathcal{M}(\vec{\mu}^0)(x)\right)^\delta \ dx\le R^\delta\left( 1 + \frac{c }{1-m\delta}\frac{\prod_{j=1}^m \|\mu^0_j\|^{1/m}_{L^1}}{R^{\frac{1}{m}}|Q|}\right)
\end{equation*}

Now we choose $R=\displaystyle{\frac{\prod_{j=1}^m \|\mu^0_j\|_{L^1}}{|Q|^m}}$ and we get
\begin{eqnarray*}
\frac{1}{|Q|}\int_Q \left(\mathcal{M}(\vec{\mu})(x)\right)^\delta \ dx & \le &  \left(\frac{c}{1-m\delta}\frac{\prod_{j=1}^m \|\mu^0_j\|_{L^1}}{|Q|^m}\right)^\delta\\
&\le & \frac{c3^{n}}{1-m\delta}\left(\frac{\prod_{j=1}^m \mu_j(\tilde{Q})_j}{|\tilde{Q}|^m}\right)^\delta\\
&\le & \frac{c_{n}}{1-m\delta}\left(\mathcal{M}(\vec{\mu})(x)\right)^\delta
\end{eqnarray*}

\end{proof}

The following Proposition can be viewed as an integral version of the main result of the previous section, namely Theorem \ref{thm:Feff-Steinish}. It follows from Lerner's formula as well, but this \emph{integral} version is the key to obtain the result for commutators, which cannot be obtained by means of the first approach.  

\begin{proposition}\label{pro:previo}
Let $f$ be a measure function such that $\supp f\subset Q$, being $Q$ a fixed cube. Let $0<\delta<1$ and let $w\in A_q$. Then we have that
\begin{equation}\label{eq:previo}
\|f-m_f(Q)\|_{L^1(w,Q)}\leq c\, 2^q\, [w]_{A_q}\|M^{\#,d}_{\delta}(f)\|_{L^1(w,Q)}
\end{equation}
\end{proposition}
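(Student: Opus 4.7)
The plan is to derive \eqref{eq:previo} by applying Lerner's formula to $|f-m_f(Q)|$, integrating against $w$, and then bounding both terms of the decomposition by $\|M^{\#,d}_\delta f\|_{L^1(w,Q)}$. The factors $[w]_{A_q}$ and $2^q$ appearing in the statement will enter only through the comparison between $w(Q_j^k)$ and $w(E_j^k)$, where $\{Q_j^k\}$ are the Lerner cubes and $E_j^k := Q_j^k \setminus \Omega_{k+1}$ are their pairwise disjoint ``sparse'' remainders satisfying \eqref{eq:remark-Lerner}.

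First I would invoke Theorem \ref{thm:andreiformula} to obtain, for a.e.\ $x\in Q$,
\begin{equation*}
|f(x) - m_f(Q)| \le 4M_{1/4;Q}^{\#}f(x) + 4\sum_{k,j}\omega_{1/2^{n+2}}(f;\hat{Q}_j^k)\chi_{Q_j^k}(x).
\end{equation*}
For the first term, the rearrangement estimate \eqref{eq:rearravg-delta} applied to $f-c$ followed by the infimum over $c$ gives $M_{1/4;Q}^{\#}f(x) \le 4^{1/\delta} M^{\#,d}_\delta f(x)$ on $Q$, whose $L^1(w,Q)$ norm is absorbed directly into the right-hand side with no $[w]_{A_q}$ factor. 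For each oscillation in the sum, the same rearrangement estimate applied on the dyadic cube $\hat{Q}_j^k$ yields, for every $y\in \hat{Q}_j^k$,
\begin{equation*}
\omega_{1/2^{n+2}}(f;\hat{Q}_j^k) \le 2^{(n+2)/\delta}\inf_{c\in\mathbb{R}}\Bigl(\frac{1}{|\hat{Q}_j^k|}\int_{\hat{Q}_j^k}|f-c|^\delta\,dx\Bigr)^{1/\delta} \le c_n\, M^{\#,d}_\delta f(y),
\end{equation*}
and in particular $\omega_{1/2^{n+2}}(f;\hat{Q}_j^k) \le c_n\inf_{Q_j^k}M^{\#,d}_\delta f$.

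The decisive step is where the factors $[w]_{A_q}$ and $2^q$ are produced. Using the standard $A_q$ inequality $w(Q_j^k)\le [w]_{A_q}(|Q_j^k|/|E_j^k|)^q w(E_j^k)$ combined with $|Q_j^k|\le 2|E_j^k|$ from \eqref{eq:remark-Lerner} gives
\begin{equation*}
w(Q_j^k) \le 2^q[w]_{A_q}\,w(E_j^k).
\end{equation*}
Summing and exploiting the disjointness of $\{E_j^k\}$ in $Q$, together with the fact that $\inf_{Q_j^k}M^{\#,d}_\delta f \le M^{\#,d}_\delta f(y)$ for every $y\in E_j^k$, the oscillation contribution collapses into
\begin{equation*}
\sum_{k,j}w(Q_j^k)\,\inf_{Q_j^k}M^{\#,d}_\delta f \le 2^q[w]_{A_q}\sum_{k,j}\int_{E_j^k}M^{\#,d}_\delta f\, w\,dx \le 2^q[w]_{A_q}\|M^{\#,d}_\delta f\|_{L^1(w,Q)},
\end{equation*}
which, together with the bound on the first term, yields \eqref{eq:previo}.

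The one point requiring some care will be the passage from Lerner's formula, whose first term $M_{1/4;Q}^{\#}f$ is stated in Theorem \ref{thm:andreiformula} as a supremum over \emph{all} subcubes of $Q$, to the purely \emph{dyadic} $M^{\#,d}_\delta f$ on the right-hand side. I would resolve this either by quoting the dyadic version of the decomposition (which holds with identical structure since the proof runs via dyadic Calder\'on--Zygmund stopping times on $Q$), or by first bounding $M_{1/4;Q}^{\#}f$ by the non-dyadic $M^{\#}_\delta f$ via \eqref{eq:localsharp-VS-sharp} and then using a standard shifted-dyadic-grid argument to majorize it by $M^{\#,d}_\delta f$ in the $L^1(w)$ average. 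Either way only absolute constants are involved, and they are hidden in $c$.
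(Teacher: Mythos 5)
Your argument is correct and is essentially the paper's own proof: Lerner's formula, the rearrangement bound \eqref{eq:rearravg-delta} to control the oscillations by $\inf_{Q_j^k}M^{\#}_\delta f$, the $A_q$ comparison $w(Q_j^k)\le 2^q[w]_{A_q}w(E_j^k)$ coming from \eqref{eq:remark-Lerner}, and the disjointness of the sets $E_j^k$. The dyadic-versus-general-cube point you flag at the end is glossed over in the paper as well (it passes from \eqref{eq:localsharp-VS-sharp} to $M^{\#,d}_\delta$ without comment), and your proposed fixes handle it.
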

\begin{proof}
We start with a pointwise estimate, which follows from Lerner's formula, taking into account the  definition  of the oscillation and \eqref{eq:rearravg-delta}.
$$
|f(x)-m_f(Q)|\leq c\, M_\delta^\#f(x)+c\,\sum_{k,j}\inf_{Q_j^k}M_\delta^\#(f)\,\chi_{Q_j^k}(x).
$$
Then, taking norms,
\begin{equation*}
\norm{f-m_f(Q)}_{L^1(w,Q)}\le  c\|M^{\#,d}_{\delta}(f)\|_{L^1(w,Q)}+c\sum_{k,j}\int_{Q_j^k}\inf_{Q_j^k} M_\delta^\#(f)w(x)dx.
\end{equation*}

Now we recall that the family $\{E_j^k\}$  satisfies \eqref{eq:remark-Lerner} and  use the following property  of the $A_q$ class of weights: let  $w\in A_q$ and let $Q$  be a cube, then for each measurable sets such that $E\subset Q$,
\begin{equation*}
 w(Q)\le\left(\frac{|Q|}{|E|}\right)^q[w]_{A_q}w(E).
\end{equation*}
Since for any index $(j,k)$ we have the property $|Q_j^k|\le2|E_j^k|$, it follows that
\begin{equation*}
 w(Q_j^k)\le2^q[w]_{A_q}w(E_j^k).
\end{equation*}
If we apply this on each term of the sum, we obtain
\begin{equation*}
 \int_{Q_j^k}\inf_{Q_j^k} M_\delta^\#(f)w(x)dx \le c 2^q\,[w]_{A_q}\inf_{Q_j^k}M_\delta^\#(f)\, w(E_j^k)
\end{equation*}
Finally, we obtain that
\begin{equation*}
\norm{f-m_f(Q)}_{L^1(w,Q)} \leq c \,2^q\,[w]_{A_q}\norm{M_\delta^\#(f)}_{L^1(w,Q)},
\end{equation*}
since $\{E_j^k\}$ is a pairwise disjoint subsets family.

\end{proof}

The following lemma gives a way to produce $A_{1}$ weights with special control on the constant. It is based on the  so called Rubio de Francia iteration scheme or algorithm.

\begin{lemma}\cite{LOP3}\label{lem:rubiodefrancia}
Let $M$ be the usual Hardy--Littlewood maximal operator and let $0<r<\infty$. Define the operator $R:L^r(\mathbb{R}^n)\to L^r(\mathbb{R}^n)$ as follows. For a given $h\in L^{r}(\mathbb{R}^n)$, consider the sum:
$$
R(h)=\sum_{k=0}^{\infty}{\frac1{2^k}}{\frac{M^{k}h}{\norm{M}_{L^r(\mathbb{R}^n)}^{k}}},
$$
Then $R$ satisfies the following properties:
\begin{enumerate}
  \item[(i)] $h\leq R(h)$;
  \item[(ii)] $\norm{Rh}_{L^r(\mathbb{R}^n)}\leq 2\norm{h}_{L^r(\mathbb{R}^n)}$;
  \item[(iii)] For any nonnegative $h\in L^r(\mathbb{R}^n)$, we have that $Rh \in A_1$with
$$[Rh]_{A_1} \leq 2\norm{M}_{L^r(\mathbb{R}^n)} \leq c_n\,r'.$$
\end{enumerate}
\end{lemma}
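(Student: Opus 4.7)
The plan is to verify the three properties directly from the explicit definition of $R$; there is no real obstacle, this is purely a computation driven by geometric series and the sublinearity of $M$.

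Property (i) is immediate: the $k=0$ term in the series defining $R(h)$ is $M^{0}h = h$, and every other term is nonnegative, so $h\le R(h)$ pointwise. For property (ii), I would apply the triangle inequality in $L^{r}(\mathbb{R}^{n})$ together with the iterated operator norm bound $\|M^{k}h\|_{L^{r}}\le \|M\|_{L^{r}}^{k}\|h\|_{L^{r}}$. This makes the $k$-th summand at most $2^{-k}\|h\|_{L^{r}}$, and summing the geometric series gives
\begin{equation*}
\|Rh\|_{L^{r}(\mathbb{R}^{n})}\le \sum_{k=0}^{\infty}\frac{1}{2^{k}}\|h\|_{L^{r}(\mathbb{R}^{n})}=2\|h\|_{L^{r}(\mathbb{R}^{n})}.
\end{equation*}

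For property (iii), the key observation is that $M$ is sublinear and can be brought inside the sum, yielding
\begin{equation*}
M(Rh)(x)\le \sum_{k=0}^{\infty}\frac{1}{2^{k}}\frac{M^{k+1}h(x)}{\|M\|_{L^{r}(\mathbb{R}^{n})}^{k}}.
\end{equation*}
Reindexing with $j=k+1$ and pulling out a factor of $2\|M\|_{L^{r}(\mathbb{R}^{n})}$ transforms the right-hand side into
\begin{equation*}
2\|M\|_{L^{r}(\mathbb{R}^{n})}\sum_{j=1}^{\infty}\frac{1}{2^{j}}\frac{M^{j}h(x)}{\|M\|_{L^{r}(\mathbb{R}^{n})}^{j}}\le 2\|M\|_{L^{r}(\mathbb{R}^{n})}\,Rh(x),
\end{equation*}
where the last inequality comes from extending the sum back to $j=0$ using nonnegativity. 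Thus $M(Rh)\le 2\|M\|_{L^{r}}\,Rh$ pointwise, which is exactly the $A_{1}$ condition with $[Rh]_{A_{1}}\le 2\|M\|_{L^{r}(\mathbb{R}^{n})}$.

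Finally, for the estimate $2\|M\|_{L^{r}(\mathbb{R}^{n})}\le c_{n}r'$, I would invoke the classical sharp $L^{r}$ bound for the Hardy--Littlewood maximal operator, which follows from Marcinkiewicz interpolation between the weak $(1,1)$ inequality and the trivial $L^{\infty}$ bound, and yields $\|M\|_{L^{r}(\mathbb{R}^{n})}\le c_{n}\,r'$ with $c_{n}$ depending only on the dimension. This bound is what makes the iteration scheme quantitatively useful in the applications later in the paper, since it gives a linear (rather than exponential) control on $[Rh]_{A_{1}}$ as $r\to 1^{+}$.
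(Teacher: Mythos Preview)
Your argument is correct and is in fact the standard verification of the Rubio de Francia iteration scheme. Note, however, that the paper does not supply its own proof of this lemma: it is stated with a citation to \cite{LOP3} and used as a black box. So there is no ``paper's proof'' to compare against beyond the reference, and what you have written is exactly the classical computation one finds there. One minor remark: the hypothesis $0<r<\infty$ in the statement is slightly loose, since $\|M\|_{L^{r}(\mathbb{R}^{n})}$ is only finite for $r>1$; your proof (and the applications in the paper, which always take $r=p'>1$) implicitly use this.
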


\subsection{The model case}

Consider two nonnegative operators $T_1$ and $T_2$, where typically $T_1$ is the absolute value of a singular operator and $T_2$ is an appropriate maximal operator that will act as a control operator.  As in the introduction we will be slightly vague on the use of the notation, since here $f$ will stand for a single function, a vector or an infinite sequence of functions, depending on the operators. Assume that, for any cube $Q$, we have a weighted $L^1$  local Coifman-Fefferman type inequality.  To be more precise we will assume the following:

1)  There is an special positive parameter  $\beta$ and an index $1\le q\le \infty$ for which we can find a constant $c$ such that for any $w\in A_q$  and any cube $Q$,
\begin{equation}\label{eq:genericC-F}
 \norm{T_1 f}_{L^1(w,Q)}\le c [w]^\beta_{A_q}\norm{T_2 f}_{L^1(w,Q)},\qquad 
\end{equation}
for appropriate functions $f$. The parameter $\beta$ is key in the sequel.

2)  Suppose that the (maximal type) operator $T_2$ is so that $(T_2f)^\frac{1}{q-1}\in A_1$ with
\begin{equation}\label{eq:maximalA1}
[(T_2f)^\frac{1}{q-1}]^{q-1}_{A_1}\le a
\end{equation}
where $a$ is a constant independent of $f$.

The general purpose is to estimate the level set function $\varphi$ as in \eqref{eq:generic} 
$$\varphi(t):=\frac{1}{|Q|} |\{x\in Q: |T_1f(x)|> t|T_2f(x)|\}|.
$$ 

We start by applying Chebychev's inequlity for some $p>1$ that will be chosen later:
\begin{eqnarray*}
|\{x\in Q: |T_1(f)(x)|> t\,|T_2(f)(x)|\}|&\leq& {\frac1{t^p}}\int_{Q} \left|{\frac{T_1f(x)}{T_2f(x)}}\right|^p\, dx\\
&=&{\frac1{t^p}}\norm{\displaystyle{\frac{T_1f(x)}{T_2f(x)}}}_{L^p(Q)}^p\\
&\le&{\frac1{t^p}}\left(\int_{Q}  {\frac{T_1f(x)}{T_2f(x)}} \,h(x)\,dx\right)^p
\end{eqnarray*}
for some nonnegative $h$ such that  $\norm{h}_{L^{p'}(Q)}=1$. Now we apply Rubio de Francia's algorithm (Lemma \ref{lem:rubiodefrancia}) and the key hypothesis \eqref{eq:genericC-F} to obtain that
\begin{eqnarray*}
\int_{Q} {\frac{T_1(f)(x)}{T_2(f)(x)}} \,h(x)\,\,dx &\leq&\int_{Q} T_1(f)(x)\,T_2(f)(x)^{-1}R(h)(x)dx\\
&\leq& c\,[R(h)(T_2f)^{-1}]^\beta_{A_q}\int_{Q}T_2(f)(x)\frac{R(h)(x)}{T_2(f)(x)}\, dx\\
&=& c\, [R(h)(T_2f)^{-1}]^\beta_{A_q}\int_{Q} R(h)(x)\, dx\\
&\leq&c\, [R(h)(T_2f)^{-1}]^\beta_{A_q}2\,\norm{h}_{L^{p'}(Q)}|Q|^{1/p}\\
&=&2c\, [R(h)(T_2f)^{-1}]^\beta_{A_q}\,|Q|^{1/p}.
\end{eqnarray*}
Since $R(h)\in A_1$ we can use formula \eqref{eq:pro1peso}
$$[R(h)(T_2f)^{-1}]_{A_q} \le [R(h)]_{A_1}[(T_2f)^\frac{1}{q-1}]^{q-1}_{A_1} \leq p\,a.$$
since by Lemma \ref{lem:rubiodefrancia}, (iii) \,$[R(h)]_{A_1}\le p$\,  and the constant in \eqref{eq:maximalA1} is uniform on $f$.

Then, if we choose $p$ such that $\displaystyle{e^{-1}=\frac{ (ap)^\beta}{t}}$, we get
\begin{eqnarray*}
|\{x\in Q: T_1f(x)> t\, T_2f(x)\}|&\leq& 2\,c\,\left({\frac{(a\,p)^\beta}{t}}\right)^p |Q|\\
& \le & 2c\, e^{-\alpha t^\frac{1}{\beta}}|Q|
\end{eqnarray*}
where $\alpha$ depends on $\beta$ and $q$ and hence $\varphi(t)\leq 2 e^{-\alpha t^\frac{1}{\beta}}$.

Note that this model example reveals that the two hypothesis that we need to fulfill:

\begin{enumerate}
 \item[(H1)] A local Coifman--Fefferman inequality like \eqref{eq:genericC-F} with the sharpest exponent $\beta$ on the constant of the weight which controls the decay rate of the level set function $\varphi(t)$.
\item[(H2)] An appropriate power of the maximal operator $T_2$ should be a $A_1$ weight. This is the case in all the operators we consider in this paper and follows essentially from a suitable variations of Coifman--Rochberg's theorem (Lemma \ref{lem:multi-CR}).
\end{enumerate}

This scheme will be followed in the proof of the main results. In each case, we will show how to derive the appropriate local Coifman--Fefferman inequality with the correct exponent and will check that the $A_1$ constant of the control operator is uniformly bounded.

\subsection{Proofs for C--Z operators, vector valued extensions and  multilinear C--Z operators - Second approach}

We start by proving local C--F inequalities. The central tool is Proposition \ref{pro:previo}.

\begin{theorem}\label{thm:CFlineal}
Let $w\in A_{q}$, with $1\leq q<\infty$.
\begin{enumerate}
 \item Let $T$ be a Calder\'on--Zygmund integral operator. Let $f$ be a function such that $\supp{f}\subseteq Q$. Then, there exists a constant $c=c_{n}$ such that
\begin{equation*}
\norm{T^*f}_{L^1(w,Q)}\leq c\,2^q\,[w]_{A_q}\norm{Mf}_{L^1(w,Q)}.
\end{equation*}
\item Let $\overline{T}_{q}$ the vector--valued extension of Calder\'on--Zygmund integral operators. Let $f=\{f_j\}_{j=1}^{\infty}$ be a vector-valued function such that $\supp{f}\subseteq Q$. Then, there exists a constant $c=c_{n}$ such that
\begin{equation*}
\norm{\overline{T}_{q}f}_{L^1(w,Q)}\leq c\,2^q\,[w]_{A_q}\norm{M(|f|_q)}_{L^1(w,Q)}.
\end{equation*}
\item Let $T$ be a $m$-linear C--Z operator. Let $\vec{f}\,$ is a vector of $m$ functions such that $\supp{f_j}\subseteq Q$. Then, there exists a constant $c=c_{n}$ such that
\begin{equation*}
\norm{T(\vec{f}\,)}_{L^1(w,Q)}\leq c\,2^q\,[w]_{A_q}\norm{\mathcal{M}(\vec{f}\,)}_{L^1(w,Q)}.
\end{equation*}
\end{enumerate}
\end{theorem}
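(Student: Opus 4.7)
The three statements have identical structure, so I would treat them in parallel, with Proposition \ref{pro:previo} as the main engine and the pointwise sharp-function estimates \eqref{eq:MsharpT-vs-Mf}, \eqref{eq:vectorialmaximalsharp}, \eqref{eq:multilinmaximalsharp} as the operator-specific inputs. The first step is to split
\begin{equation*}
\|\mathcal{T}f\|_{L^1(w,Q)} \le \|\mathcal{T}f - m_{\mathcal{T}f}(Q)\|_{L^1(w,Q)} + |m_{\mathcal{T}f}(Q)|\,w(Q),
\end{equation*}
where $\mathcal{T}$ denotes $T^*$, $\overline{T}_q$, or the $m$-linear $T$ as the case may be. The philosophy is that Proposition \ref{pro:previo} converts the oscillation term into a sharp maximal of $\mathcal{T}f$, and then the appropriate pointwise inequality replaces this sharp maximal by the desired control operator $Mf$, $M(|f|_q)$ or $\mathcal{M}(\vec f\,)$, incurring no extra weight constant.

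For the oscillation term I would pick some $\delta>0$ (small enough that $\delta<1/m$ in the multilinear case and $\delta<1$ otherwise) and apply Proposition \ref{pro:previo} to $\mathcal{T}f$, obtaining
\begin{equation*}
\|\mathcal{T}f - m_{\mathcal{T}f}(Q)\|_{L^1(w,Q)} \le c\,2^q [w]_{A_q}\|M^{\#,d}_\delta(\mathcal{T}f)\|_{L^1(w,Q)}.
\end{equation*}
Then the three pointwise controls \eqref{eq:MsharpT-vs-Mf}, \eqref{eq:vectorialmaximalsharp}, \eqref{eq:multilinmaximalsharp} collapse this to $c\,2^q[w]_{A_q}$ times $\|Mf\|_{L^1(w,Q)}$, $\|M(|f|_q)\|_{L^1(w,Q)}$ or $\|\mathcal{M}(\vec f\,)\|_{L^1(w,Q)}$ respectively.

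For the median term I would use \eqref{eq:medianVSnormdelta} together with Kolmogorov's inequality \eqref{eq:kolmogorov} and the known weak-type endpoint of each operator (weak $(1,1)$ for $T^*$ and its vector-valued extension $\overline T_q$, and the multilinear endpoint $L^1\times\cdots\times L^1\to L^{1/m,\infty}$ for the $m$-linear $T$). Since $f$ (or each component $f_j$) is supported in $Q$, this chain shows, as in \eqref{eq:medianT-vs-M}, that $|m_{\mathcal{T}f}(Q)|\le c\,\inf_Q \mathfrak{M}(f)$, where $\mathfrak{M}$ is the appropriate control operator from the table. Integrating the infimum against $w$ over $Q$ yields a bound of the form $c\,\|\mathfrak{M}(f)\|_{L^1(w,Q)}$, which, absorbed using $[w]_{A_q}\ge 1$, completes the proof.

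The two points I would expect to require the most care are (i) verifying that the correct exponent $\delta$ is available in each case so that both \eqref{eq:medianVSnormdelta} and the pointwise sharp-function estimate apply simultaneously (for the $m$-linear operator one really needs $\delta<1/m$), and (ii) making sure the median term is handled uniformly in $w$, so that the final constant depends on $w$ only through the single factor $[w]_{A_q}$. Beyond that the argument is a direct plug-in of Proposition \ref{pro:previo}.
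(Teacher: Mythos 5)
Your proposal is correct and follows essentially the same route as the paper: apply Proposition \ref{pro:previo} to the oscillation term, collapse the sharp maximal function via the pointwise estimates \eqref{eq:MsharpT-vs-Mf}, \eqref{eq:vectorialmaximalsharp}, \eqref{eq:multilinmaximalsharp}, and bound the median term through \eqref{eq:medianVSnormdelta}, Kolmogorov's inequality \eqref{eq:kolmogorov} and the relevant weak-type endpoint, exactly as in \eqref{eq:medianT-vs-M}. Your cautionary remarks about choosing $\delta<1/m$ in the multilinear case and keeping the median bound independent of $w$ are precisely the points the paper handles implicitly.
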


\begin{proof}
In all three cases we start with Proposition \ref{pro:previo} and use \eqref{eq:MsharpT-vs-Mf}, \eqref{eq:vectorialmaximalsharp} and \eqref{eq:multilinmaximalsharp} to control the sharp maximal function. It remains to prove that we can control the median value in each case. For $T^*$, we already have done it in \eqref{eq:medianT-vs-M}. There we proved that
\begin{equation*}
m_{T^*f}(Q) \leq {\frac{c}{|Q|}}\int_{Q}|f(x)|\,dx\le cMf(y),\quad \text{ for all }\ y\in Q,
\end{equation*}
and hence
$$w(Q)\,m_{T^*f}(Q)\le c\,\norm{Mf}_{L^1(w,Q)}.$$
The case of the vector-valued operators follows the same steps. The multilinear case also follows from Kolmogorov's inequality and the weak type boundedness of multilinear C--Z operators.

\end{proof}
At this point, we have proved Coifman--Fefferman inequalities like \eqref{eq:genericC-F} with  $\beta=1$ in all the cases. It remains to check that the factorizazion argument can be performed. For C--Z operators and its vector-valued extension, we can use \eqref{eq:genericC-F} with $q=3$ (or any larger $q$). Therefore, \eqref{eq:maximalA1} holds in both cases by Lemma \ref{lem:multi-CR} for $m=1$. Note that what we have to control in both cases is $ [M(\mu)^\frac{1}{2}]^2_{A_1}$. For the multilinear case, we have that, for $q=m+2$,

\begin{equation*}
 [\mathcal{M}(\vec{f}\,)^{\frac{1}{q-1}}]^{q-1}_{A_1}\le \left(\frac{C_n}{1-\frac{m}{m+1}}\right)^{m+1}.
\end{equation*}
So we finish the proof of Theorem \ref{thm:singinteg} and Theorem \ref{thm:multisinginteg}.

\subsection{Results for commutators.}
As before, we need to prove an appropriate Coifman--Fefferman inequality.
\begin{theorem}\label{thm:cflocalcom}
Let $w\in A_{q}$ be, with $1\leq q<\infty$. Let $T$ be a C--Z operator and $b\in BMO$. Let $f$ be a function such that $\supp{f}\subseteq Q$. Then, there exists a dimensional constant $c=c_{n}$ such that
\begin{equation}\label{eq:cflocal-commu}
\norm{[b,T]f}_{L^1(w,Q)}\leq c\,\norm{b}_{\strt{1.7ex}BMO}2^{2q}[w]_{A_q}^2\norm{M^2f}_{L^1(w,Q)}.
\end{equation}
In the higher order commutator case we get that
\begin{equation}\label{eq:cflocal-commu-super}
\norm{T_{b}^kf}_{L^1(w,Q)}\leq c\,\norm{b}_{\strt{1.7ex}BMO}^k2^{(k+1)q}[w]_{A_q}^{k+1}\norm{M^{k+1}f}_{L^1(w,Q)}.
\end{equation}
\end{theorem}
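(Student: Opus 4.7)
The plan is to combine Proposition \ref{pro:previo} with the pointwise sharp-maximal estimate \eqref{eq:mdeltaepsilon} and iterate; each application of Proposition \ref{pro:previo} supplies one factor $c\,2^q[w]_{A_q}$, and exactly $k+1$ iterations produce the power $[w]_{A_q}^{k+1}$ in \eqref{eq:cflocal-commu-super}.

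I would first carry out the commutator case $k=1$. Split
\[
\|[b,T]f\|_{L^1(w,Q)}\le \|[b,T]f-m_{[b,T]f}(Q)\|_{L^1(w,Q)}+|m_{[b,T]f}(Q)|\,w(Q),
\]
and apply Proposition \ref{pro:previo} to the first summand to obtain the factor $c\,2^q[w]_{A_q}$ times $\|M_\delta^{\#,d}([b,T]f)\|_{L^1(w,Q)}$. Fixing $0<\delta<\varepsilon<1$ and invoking \eqref{eq:mdeltaepsilon} with $k=1$, this last norm is bounded by
\[
c\|b\|_{BMO}\bigl(\|M_\varepsilon^d(Tf)\|_{L^1(w,Q)}+\|M^2f\|_{L^1(w,Q)}\bigr).
\]
The decisive step---and the main obstacle---is the handling of $\|M_\varepsilon^d(Tf)\|_{L^1(w,Q)}$. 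For this I would apply Proposition \ref{pro:previo} a second time, now to $M_\varepsilon^d(Tf)$, then use \eqref{eq:dosmaximales} to replace $M_\delta^{\#,d}(M_\varepsilon^d(Tf))$ by $M_\varepsilon^{\#,d}(Tf)$, and bound the latter pointwise by $c\,Mf\le c\,M^2f$ via \eqref{eq:MsharpT-vs-Mf}. This second application contributes the second factor $2^q[w]_{A_q}$, combining with the first to yield the desired $2^{2q}[w]_{A_q}^{2}$ in \eqref{eq:cflocal-commu}.

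For the two medians that arise in the splitting, I would use Kolmogorov's inequality \eqref{eq:kolmogorov} together with \eqref{eq:medianVSnormdelta}. The median $m_{[b,T]f}(Q)$ is controlled via the $L\log L$ endpoint bound of Theorem \ref{thm:weakcomm}, giving $|m_{[b,T]f}(Q)|\le c\|b\|_{BMO}\|f\|_{L\log L,Q}$, which is in turn $\le c\|b\|_{BMO}\inf_Q M^2f$ through the classical equivalence $M^2f\simeq M_{L\log L}f$. The auxiliary median of $M_\varepsilon^d(Tf)$ is bounded by $c\inf_Q Mf$ using Kolmogorov together with the weak $(1,1)$ types of $T$ and $M$. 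In either case the median times $w(Q)$ is dominated by $\|M^2f\|_{L^1(w,Q)}$, which is absorbed into the right-hand side.

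For the iterated commutators \eqref{eq:cflocal-commu-super} I would proceed by induction on $k$, establishing simultaneously the bounds for $\|T_b^j f\|_{L^1(w,Q)}$ and for $\|M_\varepsilon^d(T_b^j f)\|_{L^1(w,Q)}$ for $j=0,1,\dots,k$. The inductive step expands $M_\delta^{\#,d}(T_b^k f)$ via \eqref{eq:mdeltaepsilon} into a sum of terms $\|M_\varepsilon^d(T_b^j f)\|_{L^1(w,Q)}$ with $j<k$; each of these is absorbed through the induction hypothesis after replacing $M^{j+1}f$ by the larger $M^{k+1}f$, and two further applications of Proposition \ref{pro:previo}---one to $T_b^k f$ itself and one to $M_\varepsilon^d(T_b^k f)$, bridged again by \eqref{eq:dosmaximales}---close the loop and produce the missing factor $(2^q[w]_{A_q})^{2}$ at level $k$. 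The medians at every stage are controlled through the natural $L(\log L)^j$ weak endpoint extension of Theorem \ref{thm:weakcomm} for the $j$-th iterated commutator.
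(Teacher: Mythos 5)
Your proposal is correct and follows essentially the same route as the paper: Proposition \ref{pro:previo} applied to $[b,T]f$, the pointwise bound \eqref{eq:mdeltaepsilon}, a second application of Proposition \ref{pro:previo} to $M_\varepsilon(Tf)$ bridged by \eqref{eq:dosmaximales} and \eqref{eq:MsharpT-vs-Mf}, with the medians handled by Kolmogorov, the weak types, and the $L\log L$ endpoint of Theorem \ref{thm:weakcomm}, and the higher-order case by induction (which the paper itself only sketches, deferring details to \cite{O2}). The only cosmetic difference is that you invoke the equivalence $M_{L\log L}f\simeq M^2f$ for the median of the commutator, while the paper derives the same bound directly via a layer-cake computation and Stein's $L\log L$ estimate.
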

\begin{proof}

Using Proposition \ref{pro:previo} we get that
\begin{eqnarray*}
\norm{T_bf}_{L^1(w,Q)} & \leq & c2^q [w]_{A_q}\int_{Q}M^{\#,Q}_{\delta}(T_bf)\, w(x)dx+w(Q)m_{[b,T]f}(Q)\\
&= & I + II
\end{eqnarray*}
For the first term, by \eqref{eq:mdeltaepsilon}  we have that
\begin{equation*}
I  \leq  c 2^q[w]_{A_q}\|b\|_{BMO}\left(\norm{M^Q_{\varepsilon}(Tf)}_{L^1(w,Q)} +\norm{M^2f}_{L^1(w,Q)}\right)
\end{equation*}
Now we write $L(Q):=w(Q)m_{M^Q_{\varepsilon}(Tf)}(Q) $ and  we apply Proposition \ref{pro:previo}  with some $0<\delta<\varepsilon$ to the first norm to obtain that
\begin{eqnarray*}
\norm{M^Q_{\varepsilon}(Tf)}_{L^1(w,Q)}  & \le & c2^q[w]_{A_q}\norm{M^{\#,Q}_\delta\left(M^Q_{\varepsilon}(Tf)\right)}_{L^1(w,Q)}+ L(Q) \\
 &\leq &c2^q[w]_{A_q}\norm{M^{\#,Q}_\varepsilon(Tf)}_{L^1(w,Q)}+ L(Q)\\
&\leq &c2^q[w]_{A_q}\norm{Mf}_{L^1(w,Q)}+ L(Q)
\end{eqnarray*}
by \eqref{eq:dosmaximales} and \eqref{eq:MsharpT-vs-Mf}. Now we have to bound $L(Q)$. We apply property \eqref{eq:medianVSnormdelta} for the median value with some $0<\delta<\varepsilon$, Kolmogorov's inequality twice and the weak type of both $M$ and $T$:
\begin{eqnarray*}
m_{M_{\varepsilon}(\chi_QTf)}(Q) & \le & c\left(\frac{1}{|Q|}\int_Q M^Q_{\varepsilon}(Tf)^\delta\ dx\right)^{\frac{1}{\delta}} \\
& \le & \left(\frac{1}{|Q|}\int_Q M^Q(|Tf|^\varepsilon)^\frac{\delta}{\varepsilon}\ dx\right)^{\frac{\varepsilon}{\delta}\frac{1}{\varepsilon}} \\
& \le & \norm{M^Q(|Tf|^\varepsilon)}^\frac{1}{\varepsilon}_{L^{1,\infty}(\frac{dx}{|Q|},Q)}\\
& \le & \left(\frac{1}{|Q|}\int_Q |Tf|^\varepsilon\ dx\right)^\frac{1}{\varepsilon}\le Mf(x) \qquad \text{ for all } x\in Q.\\
\end{eqnarray*}
Therefore, we have that $w(Q)m_{M_{\varepsilon}(Tf)}(Q) \le \norm{Mf}_{L^1(w,Q)}$. Combining all previous estimates, we get
$$
I  \leq  c 2^{2q}[w]^2_{A_q}\|b\|_{BMO}\norm{Mf}_{L^1(w,Q)} +c [w]_{A_q}\|b\|_{BMO}\norm{M^2f}_{L^1(w,Q)}
$$
From this estimate, using that $[w]_{A_{q}}\ge 1$ and by dominating $M$ by $M^2$, we obtain the desired result if we can control $II$, involving the median value of the commutator. To that end, we will use the weak estimate from Theorem \ref{thm:weakcomm}. For $\phi(t)=t(1+\log^+(t))$,
\begin{eqnarray*}
 m_{[b,T]f}(Q)& \le & \left(\frac{1}{|Q|}\int_Q |T_bf|^\delta\ dx\right)^\frac{1}{\delta}\\
& \le &  \left(\frac{1}{|Q|}\int_0^\infty \delta t^{\delta-1} |\{x\in Q: |T_bf|>t\}|\ dt\right)^\frac{1}{\delta}\\
& \le &  \left(R^\delta + \frac{1}{|Q|}\int_R^\infty \delta t^{\delta-1} \int_Q \phi\left(\frac{|f(x)|}{t}\right)\ dx dt\right)^\frac{1}{\delta}\\
\end{eqnarray*}
for any $R>0$ (to be chosen). By the submultiplicativity of $\phi$, we have that
\begin{eqnarray*}
 m_{[b,T]f}(Q)& \le & \left(R^\delta + \frac{1}{|Q|}\int_Q \phi\left(|f(x)|\right)\ dx \int_R^\infty\delta t^{\delta-1}\phi(1/t)\ dt \right)^\frac{1}{\delta}\\
& \le & \left(R^\delta + \frac{R^{\delta-1} }{|Q|}\int_Q |f(x)|(1+\log^+   \frac{|f(x)|}{|f|_Q}   )\ dx \right)^\frac{1}{\delta}\\
&\le& R \left(1+ \frac{1}{R|Q|}\int_Q Mf(x)\ dx \right)^\frac{1}{\delta}.
\end{eqnarray*}
where we have used the following well known estimate essentially due to E. Stein \cite{Stein-LlogL},
\begin{equation*}
\int_Q w\, \log(e +
\frac{w}{w_Q})\, dx  \le c_n\,
\int_{Q} M(w\chi_Q) \, dx \qquad w\geq 0.
\end{equation*}
If we now choose $R=\frac{1}{|Q|}\int_Q Mf(x)\ dx$, then we obtain that
\begin{equation*}
 m_{[b,T]f}(Q)\le M^2f(x)\qquad \text{ for all } x\in Q.
\end{equation*}
This clearly implies that $w(Q)m_{[b,T]f}(Q)\le\norm{M^2f}_{L^1(w,Q)}$ and the proof of inequality \eqref{eq:cflocal-commu} is complete. The higher order commutator bound \eqref{eq:cflocal-commu-super} is technically more complicated, but it follows from the same ideas by using an induction argument. The details can be found in \cite{O2}

\end{proof}

Hence, we have the hypothesis (H1) of our model. We finish the proof of Theorem  \ref{thm:commutator} by proving that we have also the second hypothesis (H2). Namely, we have $M^2$ acting as a control operator, so we have to prove that, for some $q>1$,
\[
[(M^2f)^\frac{1}{q-1}]^{2(q-1)}_{A_1}\le C.
\]
But since $M^2(f)=M(M(f))$, this is, once again, Coifman-Rochberg theorem. We only have to pick, for instance, $q=3$.

\subsection{Proof for the square functions and for the vector-valued maximal function - Second approach}

For the dyadic square function $S$, the statement of Theorem \ref{thm:squarefunction} and the discussion about the ``model'' of proof suggest that we need a C-F inequality with $\beta=\frac{1}{2}$ as exponent on the weight. This is essentially what we borrow from \cite{CMP-ADV} in  \eqref{eq:squareOscilaDyadic} for the dyadic case and from \eqref{eq:sharplocal-SquareCont} for the continuous case.
From those estimates for the oscillation and using Lerner's formula, we can derive the following Coifman-Fefferman type inequality.
\begin{lemma}\label{lem:squareCF}
Let $S$ be the dyadic square function operator $S_d$ or the continuous square function $g_{\mu}^{*}$  and let $w\in A_q$. Then for any function $f$ and every cube $Q$
\begin{equation}\label{eq:squareCF}
\int_Q(Sf(x))^2 w(x)\ dx \le c\,2^q\, [w]_{A_q}\int_Q(Mf(x))^2 w(x)\ dx
\end{equation}
\end{lemma}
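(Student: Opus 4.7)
The plan is to apply Lerner's formula (Theorem \ref{thm:andreiformula}) directly to the function $g = (Sf)^2$ and exploit the fact that the hypotheses \eqref{eq:squareOscilaDyadic} (dyadic case) and \eqref{eq:sharplocal-SquareCont} (continuous case) are already phrased in terms of an oscillation-type quantity of the \emph{square} $(Sf)^2$, pointwise dominated by $(Mf)^2$. Essentially, this is a re-run of the argument of Proposition \ref{pro:previo} with $(Sf)^2$ playing the role of $f$ and $(Mf)^2$ playing the role of $M^{\#,d}_\delta f$.

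First, Lerner's formula gives the pointwise decomposition
\begin{equation*}
|(Sf(x))^2 - m_{(Sf)^2}(Q)| \le 4\, M^{\#}_{1/4;Q}((Sf)^2)(x) + 4\sum_{k,j} \omega_{1/2^{n+2}}((Sf)^2;\hat{Q}_j^k)\,\chi_{Q_j^k}(x).
\end{equation*}
Next, invoking \eqref{eq:squareOscilaDyadic} (or \eqref{eq:sharplocal-SquareCont}) on every cube appearing in the right-hand side gives $\omega_\lambda((Sf)^2;R) \le c_\lambda (\frac{1}{|R|}\int_R|f|)^2 \le c_\lambda (Mf(y))^2$ for any $y\in R$, and similarly for the local sharp maximal term. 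The decomposition thus becomes
\begin{equation*}
|(Sf(x))^2 - m_{(Sf)^2}(Q)| \le c(Mf(x))^2 + c\sum_{k,j}\inf_{Q_j^k}(Mf)^2 \,\chi_{Q_j^k}(x).
\end{equation*}

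Now I would integrate against $w$ over $Q$ and use the $A_q$ property in the form $w(Q_j^k) \le 2^q [w]_{A_q} w(E_j^k)$, valid thanks to \eqref{eq:remark-Lerner} and the same calculation as in Proposition \ref{pro:previo}, together with the pairwise disjointness of the sets $E_j^k$. This yields
\begin{equation*}
\|(Sf)^2 - m_{(Sf)^2}(Q)\|_{L^1(w,Q)} \le c\,2^q [w]_{A_q}\,\|(Mf)^2\|_{L^1(w,Q)}.
\end{equation*}

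Finally, to remove the subtraction of the median, I would bound $w(Q)\,m_{(Sf)^2}(Q)$ by $c\|(Mf)^2\|_{L^1(w,Q)}$. Using \eqref{eq:medianVSnormdelta} with some small $\delta$, Kolmogorov's inequality \eqref{eq:kolmogorov}, and the weak $(1,1)$ boundedness of both $S_d$ and $g_\mu^*$ applied to $f$ (which is supported in $Q$, as in the statements of Theorem \ref{thm:squarefunction}), one obtains $m_{(Sf)^2}(Q) \le c(Mf(x))^2$ for every $x\in Q$, and integration against $w$ completes the argument. The only place that requires a little care is the treatment of the median: it must be handled via Kolmogorov plus weak $(1,1)$ rather than by a direct pointwise estimate, but this is essentially the same step that appears in the proof of Theorem \ref{thm:CFlineal}, so no genuine obstacle arises.
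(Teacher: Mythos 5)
Your proposal is correct and follows essentially the same route as the paper, which proves the lemma by applying Lerner's formula to $(Sf)^2$ (or $(g_{\mu}^{*}f)^2$), bounding the oscillations via \eqref{eq:squareOscilaDyadic} and \eqref{eq:sharplocal-SquareCont}, and handling the median with Kolmogorov's inequality and the weak $(1,1)$ type of the operator; your write-up simply makes explicit the details the paper leaves to the reader.
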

\begin{proof}
It follows directly from Lerner's formula in both cases. For the median value, we can use Kolmogorov and the weak $(1,1)$ type of the operator as in the previous cases.

\end{proof}
Now we prove Theorem \ref{thm:squarefunction} by using our model, but with a twist.
\begin{proof}[of Theorem \ref{thm:squarefunction}]
Motivated by Lemma \ref{lem:squareCF}, we write the level set function $\varphi(t)$ as follows.
\begin{eqnarray*}
\left|\left\{x\in Q: \frac{Sf(x)^2}{Mf(x)^2}>t^2\right\}\right| &\le & \frac{1}{t^{2p}}\int_{Q} {\frac{Sf(x)^{2p}}{Mf(x)|^{2p}}}\, dx\\
&=& \frac{1}{t^{2p}}\norm{\displaystyle{\left({\frac{Sf}{Mf}}\right)^2}}_{L^p(Q)}^p\\
&\le&\frac1{t^{2p}}\left(\int_{Q}\frac{Sf(x)^2}{Mf(x)^2}\,h(x)\,dx\right)^p
\end{eqnarray*}
for some $h$ such that $\norm{h}_{L^{p'}(Q)}=1$. Now we apply Rubio de Francia's algorithm and \eqref{eq:squareCF} to obtain that
\begin{eqnarray*}
\int_{Q}\frac{Sf(x)^2}{Mf(x)^2}\,h(x)\,dx & \leq & \int_{Q} Sf(x)^2Mf(x)^{-2}\,R(h)(x)\,dx \\
&\leq& [R(h)\,(Mf)^{-2}]_{A_q}\int_{Q}R(h)(x)\, dx\\
&\leq& [R(h)\,(Mf)^{-2}]_{A_q}2\,\norm{h}_{L^{p'}(Q)}|Q|^{1/p}.
\end{eqnarray*}
The same factorization argument yields
$$[Rh(Mf)^{-2}]_{A_q} \le [(Mf)^\frac{2}{q-1}]^{(q-1)}_{A_1}p\le c\,p$$
for $q=5$ by Lemma \ref{lem:multi-CR}.

We now choose $\displaystyle{e^{-1}=\frac{c_n p}{t^2}}$ to finally obtain that
\begin{equation*}
|\{x\in Q: Sf(x)> tMf(x)\}|\leq \left({\frac{c_n p}{t^2}}\right)^p |Q| \le  e^{-\alpha t^2}|Q|
\end{equation*}

\end{proof}

Finally, the proof for the vector valued extension of the maximal function follows the same steps.
\begin{proof}[of Theorem \ref{thm:vectorvaluedMaximal}]
The proof can be carried by replacing the ``2'' by ``$q$'' in case of the square function. The key estimate for the oscillation is inequality \eqref{eq:vectormaximalOscila}.

\end{proof}

\section{Acknowledgement}
The second author is supported by the Spanish Ministry of Science and Innovation grant MTM2009-08934,
the second and third authors are also supported by the Junta de Andaluc\'ia, grant FQM-4745.

%
%
%
%

\end{document}